\newtheorem{teo}{Theorem}
\newtheorem{cor}{Corollary}
\newtheorem{rem}{Remark}
\newtheorem{prop}{Proposition}
\newtheorem{lem}{Lemma}
\newcommand{\R}{\mathbb{R}}
\begin{document}
\title[Asympt. analysis of radial sign-changing solutions of the B.-N. problem]{Asymptotic analysis for radial sign-changing solutions of the Brezis-Nirenberg problem in low dimensions}
\author{Alessandro Iacopetti and Filomena Pacella}
\date{}
\subjclass[2010]{35J91, 35J61 (primary), and 35B33, 35B40,  35J20 (secondary)} 
\keywords{Semilinear elliptic equations, critical exponent, sign-changing solutions, asymptotic behavior}
\thanks{Research partially supported by MIUR-PRIN project-201274FYK7\underline\ 005 and GNAMPA-INDAM.}
\address[Alessandro Iacopetti]{Dipartimento di Matematica e Fisica, Universit\'a degli Studi di Roma Tre, L.go S. Leonardo Murialdo 1, 00146 Roma, Italy}
\email{iacopetti@mat.uniroma3.it}
\address[Filomena Pacella]{Dipartimento di Matematica, Universit\'a di Roma "La Sapienza", P.le Aldo Moro 5, 00185 Roma, Italy}
\email{pacella@mat.uniroma1.it}

\begin{abstract}
We consider the classical Brezis-Nirenberg problem in the unit ball of $\R^N$, $N\geq 3$ and analyze the asymptotic behavior of nodal radial solutions in the low dimensions $N=3,4,5,6$ as the parameter converges to some limit value which naturally arises from the study of the associated ordinary differential equation.
%\begin{equation*}
%\begin{cases}
%-\Delta u = \lambda u + |u|^{2^* -2}u & \hbox{in}\ \Omega\\
%u=0 & \hbox{on}\ \partial \Omega,
%\end{cases}
%\end{equation*}
%where $\Omega$ is a smooth bounded domain in $\R^N$, $N\geq 3$, $2^{*}=\frac{2N}{N-2}$ is the critical Sobolev exponent and $\lambda>0$  a positive parameter.
%
%The main result of the paper shows that if $N=4,5,6$ and $\lambda$ is close to zero there are no sign-changing solutions of the form 
%$$u_\lambda=PU_{\delta_1,\xi}-PU_{\delta_2,\xi}+w_\lambda, $$
%where $PU_{\delta_i}$ is the projection on $H_0^1(\Omega)$ of the regular positive solution of the critical problem in $\R^N$, centered at a point $\xi \in \Omega$ and $w_\lambda$ is a remainder term.
%
%Some additional results on norm estimates of $w_\lambda$ and about the concentrations speeds of tower of bubbles in higher dimensions are also presented.
\end{abstract}

\maketitle
%\begin{abstract}
%We study the asymptotic behavior, as $\lambda \rightarrow 0$, of least energy radial sign-changing solutions $u_\lambda$, of the Brezis-Nirenberg problem 

\section{Introduction}
We consider the Brezis-Nirenberg problem

\begin{equation} \label{PBN}
\begin{cases}
-\Delta u = \lambda u + |u|^{2^* -2}u & \hbox{in}\ B_1\\
u=0 & \hbox{on}\ \partial B_1,
\end{cases}
\end{equation}
where $\lambda >0$, $2^*=\frac{2N}{N-2}$ and $B_1$ is the unit ball of $\R^N$, $N\geq 3$.

The aim of the paper is to get asymptotic results for radial sign-changing solutions $u_\lambda$ of \eqref{PBN} in dimensions $N=3,4,5,6$. This will give the asymptotic profile of the positive and negative part of $u_\lambda$ as $\lambda$ tends to some limit value.

To motivate our analysis and to explain our results we need to recall a few known results.

The first fundamental results about the existence of positive solutions were obtained by H. Brezis and L. Nirenberg in 1983 in the celebrated paper \cite{BN}. From their results it came out that the dimension was going to play a crucial role in the study of \eqref{PBN} in a general bounded domain $\Omega$.
Indeed they proved that if $N\geq4$ there exists a positive solution of \eqref{PBN} for every $\lambda \in (0,\lambda_1(\Omega))$, $\lambda_1(\Omega)$ being the first eigenvalue of $-\Delta$ in $\Omega$ with Dirichlet boundary conditions, while if $N=3$ positive solutions exist only for $\lambda$ away from zero.

Since then several other interesting results were obtained for positive solutions, in particular about the asymptotic behavior of solutions, mainly for $N\geq 5$, because also the case $N=4$ presents more difficulties compared to the higher dimensional ones.

Concerning the case of sign-changing solutions, existence results hold if $N\geq 4$ both for $\lambda \in (0,\lambda_1(\Omega))$ and $\lambda > \lambda_1(\Omega)$ as shown in \cite{ABP2}, \cite{CFP}, \cite{CW}.

The case $N=3$ presents even more difficulties than in the study of positive solutions. In particular in the case of the ball is not yet known what is the least value $\bar\lambda$ of the parameter $\lambda$ for which sign-changing solutions exist, neither whether $\bar\lambda$ is larger or smaller than $\lambda_1(B_1)/4$. This question, posed by H. Brezis, has been given a partial answer in \cite{AMP3}.

However it is interesting to observe that in the study of sign-changing solutions even the "low dimensions" $N=4,5,6$ exhibit some peculiarities. Indeed it was first proved by Atkinson, Brezis and Peletier in \cite{ABP} and \cite{ABP2} that if $\Omega$ is the ball $B_1$ there exists $\lambda^*=\lambda^*(N)$ such that there are no radial sign-changing solutions of \eqref{PBN} for $\lambda \in (0,\lambda^*)$. Later this result was proved in \cite{AY2} in a different way.

Moreover, for $N\geq 7$ a recent result of Schechter and Zou \cite{SZ} shows that in any bounded smooth domain there exist infinitely many sign-changing solutions for any $\lambda >0$. Instead if $N=4,5,6$ only $N+1$ pairs of  solutions, for all $\lambda>0$, have been proved to exist in \cite{CW} but it is not clear that they change sign.

Coming back to radial sign-changing solutions and to the question of existence or nonexistence of them, according to the dimension, as shown by Atkinson, Brezis and Peletier, it is interesting to understand in which way these results can be extended to other bounded domains and to which kind of solutions.

In order to analyze this question let us divide the discussion in two cases: the first one when the dimension $N$ is greater or equal than $7$ and the second one when $N<7$.

In the first case ($N\geq 7$) radial sign-changing solutions $u_\lambda$ exist for all $\lambda>0$, if the domain is a ball, and analyzing the asymptotic behavior of those of least energy, as $\lambda \rightarrow 0$, it is proved in \cite{Iac} that their limit profile is that of a "tower of two bubbles". This terminology means that the positive part and the negative part of the solutions $u_\lambda$ concentrate at the same point (which is obviously the center of the ball) as $\lambda\rightarrow 0$ and each one has the limit profile, after suitable rescaling, of a "standard bubble" in $\R^N$, i.e. of a positive solution of the critical exponent problem in $\R^N$. More precisely the solutions can be written in the following way:
\begin{equation}\label{shapesol}
u_\lambda=PU_{\delta_1,\xi}-PU_{\delta_2,\xi}+w_\lambda,
\end{equation}
where $PU_{\delta_i,\xi}$, $i=1,2$ is the projection on $H_0^1(\Omega)$ of the regular positive solution of the critical problem in $\R^N$, centered at $\xi=0$, with rescaling parameter $\delta_i$ and $w_\lambda$ is a remainder term which converges to zero in $H_0^1(\Omega)$ as $\lambda\rightarrow 0$.

Inspired by this result one could then search for solutions of type \eqref{shapesol} in general bounded domains since these kind of solutions can be viewed as the ones which play the same role of the radial solutions in the case of the ball. This has been done recently in \cite{IACVAIR}, where solutions of the type \eqref{shapesol} have been constructed for $\lambda$ close to zero in some symmetric bounded domains (the symmetry makes their construction a bit easier, but the same result should be true in any bounded domain).

On the contrary, coming to the case $N<7$, in view of the nonexistence result of nodal radial solutions of \cite{ABP2} it is natural to conjecture that, in general bounded domains, there should not be solutions of the form \eqref{shapesol} for $\lambda$ close to zero. Indeed this has been recently proved in \cite{IacPac} if $N=4,5,6$, the case $N=3$ being obvious.

On the other side, if $N<7$, radial nodal solutions exist for $\lambda$ bigger than a certain value $\bar\lambda_2$ which can be studied by analyzing the associated ordinary differential equation (see \cite{ABP2}, \cite{AG}, \cite{GG2}).

Therefore, to the aim of getting analogous existence results in other bounded domains, the first step would be to analyze the asymptotic behavior of nodal radial solutions in the ball, for $\lambda \rightarrow \bar \lambda_2$, in order to understand their limit profile and guess what kind of solutions one can construct in other domains, and for which values of the parameter $\lambda$.

This is the subject of our paper.

Denoting by $u_\lambda$ a nodal radial solutions of \eqref{PBN} having two nodal regions and such that $u_\lambda(0)>0$ we get the following results:
\begin{itemize}
\item[\textbf{(i)}:] if $N=6$ then $\bar\lambda_2 \in (0,\lambda_1(B_1))$, $\lambda_1(B_1)$ being the first eigenvalue of $-\Delta$ in $H_0^1(B_1)$, and we have that, as $\lambda \rightarrow \bar\lambda_2$, $u_\lambda^+$ concentrate at the center of the ball, $\|u_\lambda^+\|_\infty\rightarrow + \infty$, and a suitable rescaling of $u_\lambda^+$ converges to the standard  positive solution of the critical problem in $\R^N$. Instead $u_\lambda^-$ converges to the unique positive solution of \eqref{PBN} in $B_1$, as $\lambda\rightarrow \bar \lambda_2$;
\item[\textbf{(ii)}:] if $N=4,5$ then $\bar\lambda_2=\lambda_1(B_1)$ and $u_\lambda^+$ behaves as for the case $N=6$, while $u_\lambda^-$ converges to zero uniformly in $B_1$;
\item[\textbf{(iii)}:] if $N=3$ then $\bar\lambda_2=\frac{9}{4}\lambda_1(B_1)$ and $u_\lambda^+$ behaves as for the case $N=6$, while $u_\lambda^-$ converges to zero uniformly in $B_1$.
\end{itemize}

In view of these results we conjecture that, in general bounded domains $\Omega$, for some "limit value" $\bar\lambda_2=\bar\lambda_2(N,\Omega)$ there should exist solutions with similar asymptotic profile as $\lambda \rightarrow \bar\lambda_2$. The number $\bar\lambda_2$ should be $\lambda_1(\Omega)$ in dimension $N=4,5$. Some work in this direction is in progress.

The paper is divided in three sections. In Section 2 we mainly recall some preliminary results. In Section 3 we analyze the asymptotic behavior of the positive part of the solutions, for all dimensions $N=3,4,5,6$. In Section 4 we analyze the negative part in the case $N=6$ and in Section 5 we complete the cases $N=3,4,5$.

%In this section we recall or prove some results about the existence and qualitative proper%ties of radial sign-changing solutions of the Brezis-Nirenberg problem (\ref{PBN}).
%
%We start by recalling the following result:
%
%\begin{prop}\label{antani}
%Suppose $3\leq N \leq 6$. Then there exists a positive real number $\lambda^\star=\lambda^\star(N)$ such that Problem (\ref{PBN}) has no radial sign-changing solutions which change sign if $\lambda \in (0,\lambda^\star)$.
%\end{prop}
%\begin{proof}
%It was first proved in \cite{ABP2} and then in \cite{AY2} in a different way. 
%\end{proof}
%
%We are interested in the study of radial sign-changing solutions with exactly two nodal regions. Thanks to Proposition \ref{antani} the first natural question is about the existence of such solutions. The answer is positive in general but, as we will show in the sequel, the set $$\Lambda:=\{\lambda \in \R^+; \ \hbox{there exists a radial sign-changing solution of Problem (\ref{PBN})}\}$$ depends strongly on the dimension $N$.

\section{Some preliminary results}

If $u_\lambda$ is a radial sign-changing solution of (\ref{PBN}) then we can write $u_\lambda=u_\lambda(r)$, where $r=|x|$ and $u_\lambda(r)$ is a solution of the problem
\begin{equation}\label{ODE}
\begin{cases}
u_\lambda^{\prime \prime} +\frac{n-1}{r}u_\lambda^{\prime} + \lambda u_\lambda + |u_\lambda |^{2^* -2}u_\lambda =0,  & \hbox{in}\ \ (0,1),\\
u_\lambda^\prime( 0)=0, \ \ u_\lambda(1)=0.&
\end{cases}
\end{equation}
We consider the following transformation

\begin{equation}\label{tran}
r \mapsto \left(\frac{N-2}{\sqrt{\lambda}\ r}\right)^{N-2},\\
\ \ u_\lambda \mapsto y(t):=\lambda^{-1/(p-1)}u_\lambda\left(\frac{N-2}{\sqrt{\lambda}\ t^{\frac{1}{N-2}}}\right).
 \end{equation} 
 
It is elementary to see that since $u_\lambda$ is a solution of the differential equation in (\ref{ODE}) then $y=y(t)$ solves
\begin{equation}\label{EF}
y^{\prime\prime}+t^{-k}(y+|y|^{p-1}y)=0,
\end{equation}
 in the interval $\left(\left(\frac{N-2}{\sqrt{\lambda}}\right)^{N-2},+\infty\right)$, where $k:=2 \frac{N-1}{N-2}$. It is clear that the transformation (\ref{tran}) generates a one-to-one correspondence between solutions of the differential equation in (\ref{ODE}) and solutions of (\ref{EF}). Equation (\ref{EF}) is an Emden-Fowler type equation and since $k>2$ it is well known that, for any $\gamma \in \R$ the problem
\begin{equation}\label{EFS}
\begin{cases}
y^{\prime\prime}+t^{-k}f(y)=0,  & \hbox{in}\ \ (0,+\infty),\\
y(t) \rightarrow \gamma,& \hbox{as}\ t \rightarrow + \infty,
\end{cases}
\end{equation}
 where $f(y):=y+|y|^{p-1}y$, has a unique solution defined in the whole $\R^+$ which we denote by $y(t;\gamma)$. Let us recall some results on the functions $y(t;\gamma)$ which are proved in \cite{ABP2}.
 
\begin{lem}\label{lem1}
Let $y=y(t,\gamma)$ be a solution of Problem (\ref{EFS}), then:
\begin{itemize}
\item[(a)] $y$ is oscillatory near $t=0$;
\item[(b)] the set $\{|y(\bar t)|;\ \bar t \ \hbox{extremum point of} \ y\}$ is an increasing sequence with respect to t; %of $|y|$ at the successive extrema, in the sense of increasing $t$, form an increasing sequence;
\item[(c)]  the set $\{|y^\prime(t_0)|;\  t_0 \ \hbox{zero of} \ y\}$ is a decreasing sequence with respect to t. 
\end{itemize}
\end{lem}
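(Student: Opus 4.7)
The plan is to combine two Lyapunov-type identities for equation \eqref{EF} with a Sturm comparison argument against a linear model. Setting $F(y):=\int_0^y f(s)\,ds=\frac{y^2}{2}+\frac{|y|^{p+1}}{p+1}$, introduce
\[
I(t):=\frac{t^{k}(y'(t))^{2}}{2}+F(y(t)),\qquad E(t):=\frac{(y'(t))^{2}}{2}+t^{-k}F(y(t)).
\]
Using $y''=-t^{-k}f(y)$, a direct differentiation gives $I'(t)=\tfrac{k}{2}\,t^{k-1}(y'(t))^{2}\ge 0$ and $E'(t)=-k\,t^{-k-1}F(y(t))\le 0$, so $I$ is nondecreasing and $E$ is nonincreasing on $(0,+\infty)$.

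Parts (b) and (c) then follow at once. At an extremum $\bar t$ one has $y'(\bar t)=0$, so $I(\bar t)=F(y(\bar t))$; comparing two extrema $\bar t_1<\bar t_2$ via the monotonicity of $I$ and the strict monotonicity of $F$ with respect to $|y|$ yields $|y(\bar t_1)|\le |y(\bar t_2)|$, which is (b). Similarly, at a zero $t_0$ one has $F(y(t_0))=0$, so $E(t_0)=\tfrac{1}{2}(y'(t_0))^{2}$; for two zeros $t_0^{(1)}<t_0^{(2)}$, monotonicity of $E$ gives $|y'(t_0^{(1)})|\ge |y'(t_0^{(2)})|$, which is (c).

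For part (a) I would argue by contradiction. If $y$ were not oscillatory near $0$, there would be $\delta>0$ such that $y$ has constant sign on $(0,\delta]$, in particular $y(t)\ne 0$ there. Writing $f(y)=(1+|y|^{p-1})y$, on $(0,\delta]$ the function $y$ satisfies the \emph{linear} ODE $y''+Q(t)y=0$ with coefficient $Q(t):=t^{-k}(1+|y(t)|^{p-1})\ge t^{-k}$. The linear comparison equation $v''+t^{-k}v=0$ is oscillatory at $t=0$ whenever $k>2$: indeed $t^{2}\cdot t^{-k}=t^{2-k}\to+\infty$ as $t\to 0^+$, so the Hille--Kneser criterion applies, producing infinitely many zeros of $v$ in $(0,\delta]$ accumulating at $0$. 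By Sturm's comparison theorem, $y$ must vanish between any two consecutive zeros of such a $v$, hence $y$ has infinitely many zeros in $(0,\delta]$, contradicting the constant sign assumption.

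The main obstacle is part (a): the key device is to recast the nonlinear equation as a linear one with a coefficient $Q(t)$ that dominates the soluble model $t^{-k}$, so that classical oscillation criteria apply. Parts (b) and (c) are essentially immediate once the correct monotone quantities $I$ and $E$ have been identified.
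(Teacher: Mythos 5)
Your proof is correct. Note that the paper itself gives no argument for this lemma---it simply cites Lemma 1 of \cite{ABP2}---so your write-up supplies a self-contained proof where the paper defers to the literature; the two Lyapunov functionals $I$ and $E$ are the standard devices for this Emden--Fowler equation and are, in substance, those of the cited source. The computations $I'(t)=\frac{k}{2}t^{k-1}(y'(t))^2\ge 0$ and $E'(t)=-k\,t^{-k-1}F(y(t))\le 0$ are right, and evaluating $I$ at critical points and $E$ at zeros does give (b) and (c). Two small points. First, the lemma asserts \emph{strictly} monotone sequences, while your argument as written only yields $\le$ and $\ge$; to upgrade, observe that for a nontrivial solution $y'$ cannot vanish on an interval (otherwise $y$ would equal a constant $c$ there with $f(c)=0$, hence $c=0$ and $y\equiv 0$ by uniqueness), so $I$ increases strictly between two distinct extrema, and likewise $y$, hence $F(y)$, is not identically zero between two distinct zeros, so $E$ decreases strictly there. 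Second, for (a) your appeal to a Hille--Kneser criterion at the singular endpoint $t=0$ is correct but can be made completely concrete with material already in the paper: the function $\alpha(t)=A_\nu\sqrt{t}\,J_\nu(2\nu t^{-1/(2\nu)})$ introduced in the proof of Proposition \ref{proplamb} solves $v''+t^{-k}v=0$ and, since $J_\nu$ has infinitely many positive zeros and $t^{-1/(2\nu)}\to+\infty$ as $t\to 0^+$, its zeros accumulate at $0$; your Sturm comparison with $Q(t)=t^{-k}(1+|y(t)|^{p-1})\ge t^{-k}$ (the same linearization trick the paper uses in the proof of Proposition \ref{proplamb}) then forces $y$ to vanish infinitely often near $0$, as required.
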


\begin{proof}
See Lemma 1 in \cite{ABP2}.
\end{proof}
\begin{lem}\label{lem2}
Let $y=y(t,\gamma)$ be a solution of Problem (\ref{EFS}) and let $T>0$ be one of its zeros, then 
$$|y(t)|< |y^\prime(T)| (T-t),$$
for all $0<t<T$.
\end{lem}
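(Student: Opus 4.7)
The plan is to split $(0,T)=(0,T_1]\cup(T_1,T)$, where $T_1$ denotes the largest zero of $y$ in $(0,T)$. Such a $T_1$ exists because Lemma~\ref{lem1}(a) produces zeros of $y$ accumulating at $0$, and uniqueness for (\ref{EF}) forces these zeros to be isolated. On the two halves I would argue by different means: strict concavity on $(T_1,T)$, and monotonicity of the extrema (Lemma~\ref{lem1}(b)) on $(0,T_1]$.

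On $(T_1,T)$ the function $y$ has constant sign; say $y>0$ (the case $y<0$ is symmetric). Then $y'(T)<0$, and from the ODE $y''=-t^{-k}(y+|y|^{p-1}y)<0$ on $(T_1,T)$. Hence $y$ is strictly concave on $[T_1,T]$, and comparison with its tangent at $T$ gives
\[
y(t)<y(T)+y'(T)(t-T)=|y'(T)|(T-t),\qquad t\in[T_1,T),
\]
which is exactly $|y(t)|<|y'(T)|(T-t)$ on $(T_1,T)$ since $y>0$ there.

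For $t\in(0,T_1]$, let $\tau_0\in(T_1,T)$ be the unique extremum of $y$ on $(T_1,T)$ and set $M_0:=|y(\tau_0)|$. Evaluating the previous inequality at $\tau_0$ and using $\tau_0>T_1$ yields
\[
M_0<|y'(T)|(T-\tau_0)<|y'(T)|(T-T_1).
\]
If $t\in(0,T_1]$ is itself a zero of $y$, then $|y(t)|=0<|y'(T)|(T-t)$. Otherwise $t$ lies in an interval $(T_{j+1},T_j)$ of consecutive zeros of $y$ with $j\geq1$, on which $y$ has constant sign and $|y|$ attains its maximum at the unique extremum $\tau_j\in(T_{j+1},T_j)$. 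Since $\tau_j<T_1<\tau_0$, Lemma~\ref{lem1}(b) gives $|y(\tau_j)|<M_0$, so that
\[
|y(t)|\leq|y(\tau_j)|<M_0<|y'(T)|(T-T_1)\leq|y'(T)|(T-t).
\]

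The only real conceptual hurdle is that a pointwise concavity/tangent-line argument cannot be propagated naively across the zeros of $y$, since the sign of $y''$ reverses at every zero. Lemma~\ref{lem1}(b) provides the right global substitute: each subsequent hump of $|y|$ to the left of $T_1$ is strictly dominated by $M_0$, which is in turn controlled by $|y'(T)|(T-T_1)$, and so the linear bound propagates to the left of $T_1$ for free, with no further estimates needed.
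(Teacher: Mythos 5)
Your argument is correct and complete: the strict concavity/tangent-line comparison on the last nodal interval $(T_1,T)$, combined with Lemma~\ref{lem1}(b) to dominate every earlier hump of $|y|$ by $M_0<|y'(T)|(T-\tau_0)<|y'(T)|(T-T_1)\leq|y'(T)|(T-t)$, does yield the stated bound on all of $(0,T)$. The paper gives no proof of its own here but simply cites Lemma~2 of \cite{ABP2}, and your argument is essentially the standard one from that reference; the only caveat is notational, since your $T_1$ (the zero of $y$ immediately preceding $T$) is not the paper's $T_1(\gamma)$.
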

\begin{proof}
See Lemma 2 in \cite{ABP2}.
\end{proof}

We shall denote the sequence of zeros of $y(t;\gamma)$ by $T_n(\gamma)$, ordered backwards, precisely: 
$$\cdots < T_3(\gamma) < T_2 (\gamma) < T_1(\gamma) < + \infty.$$
We recall some results on the asymptotic behavior of the largest zero $T_1(\gamma)$ and on the slope $y^\prime(T_1(\gamma); \gamma)$ as $\gamma \rightarrow + \infty$.

\begin{lem}\label{lem3}
Let $y$ be a solution of Problem (\ref{EFS}) and $T_1(\gamma)$ its largest zero, then:
\begin{itemize}
\item[(a)] if $2<k<3$ (which corresponds to $N>4$), then
$$ T_1(\gamma)=A(k) \gamma^{6-2k} (1+o(1)) \ \ \hbox{as } \gamma \rightarrow +\infty,$$
where $A(k):=(k-1)^{\frac{k-3}{k-2}} \frac{\Gamma(3-k)/(k-2) \Gamma ((k-1)/(k-2))}{\Gamma(2/(k-2))}$, $\Gamma$ is the Gamma function.
\item[(b)] if $k=3$ (which corresponds to $N=4$), then
$$ T_1(\gamma)=2 \log \gamma (1+o(1))\ \ \hbox{as } \gamma \rightarrow +\infty; $$
\item[(c)] if $k=4$ (which corresponds to $N=3$), then there exists $\gamma_0 \in \R^+$ and two positive constants $A,B$ such that
$$ A<T_1(\gamma)<B \ \ \hbox{for all } \gamma\geq \gamma_0.$$
\end{itemize}
\end{lem}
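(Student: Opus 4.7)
The plan is to extract $T_1(\gamma)$ from two integral identities and then estimate the resulting integrals by rescaling $y(t)=\gamma\,\tilde y(t/\gamma^{2})$, the exponent $2=(p-1)/(k-2)$ being the natural balance between amplitude and the Emden--Fowler weight in \eqref{EFS}.

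First I would derive the identities on $[T_1(\gamma),+\infty)$. Multiplying \eqref{EFS} by $y'$ and integrating by parts, using $y(T_1)=0$, $y'(+\infty)=0$ and $F(y)=\tfrac12 y^{2}+\tfrac{|y|^{p+1}}{p+1}$, yields the energy identity
\[
\tfrac12\bigl(y'(T_1)\bigr)^{2}= k\int_{T_1}^{+\infty}\! s^{-k-1}F(y)\,ds.
\]
Multiplying by $y$ and by $sy'$ and combining, the critical-exponent relations $(p+1)/2=k-1$ and $(p-1)/(k-2)=2$ make the $|y|^{p+1}$ contribution cancel exactly, giving the Pohozaev-type identity
\[
T_1\bigl(y'(T_1)\bigr)^{2}=(k-2)\int_{T_1}^{+\infty}\! s^{-k}y^{2}\,ds.
\]
Dividing produces the key formula
\[
T_1(\gamma)=\frac{k-2}{2k}\cdot\frac{\displaystyle\int_{T_1}^{+\infty}\! s^{-k}y^{2}\,ds}{\displaystyle\int_{T_1}^{+\infty}\! s^{-k-1}F(y)\,ds},
\]
reducing the asymptotics of $T_1$ to that of two explicit integrals.

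Next, set $y(t)=\gamma\,\tilde y(\tilde t)$, $\tilde t=t/\gamma^{2}$. Then $\tilde y$ solves $\tilde y''+\tilde t^{-k}|\tilde y|^{p-1}\tilde y+\gamma^{-(p-1)}\tilde t^{-k}\tilde y=0$ with $\tilde y(+\infty)=1$, so as $\gamma\to+\infty$, $\tilde y\to\tilde y_{\infty}$ locally uniformly on $(0,+\infty)$, with $\tilde y_{\infty}$ the unique positive solution of $\tilde y_{\infty}''+\tilde t^{-k}\tilde y_{\infty}^{p}=0$, $\tilde y_{\infty}(+\infty)=1$ (which corresponds, in the original radial variables, to the Aubin--Talenti bubble on $\R^{N}$). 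The substitution $\tilde y_{\infty}(\tilde t)=\tilde t^{1/2}w(\log\tilde t)$ renders the equation autonomous, $w''-\tfrac14 w+|w|^{p-1}w=0$, whose phase portrait shows a saddle at $w=0$ carrying a positive homoclinic loop; in particular $\tilde y_{\infty}>0$ on $(0,+\infty)$ and $\tilde y_{\infty}(\tilde t)\sim c_{k}\tilde t$ as $\tilde t\to 0^{+}$, with $c_{k}>0$. The change of variable $s=\gamma^{2}\tilde s$ turns the key formula into
\[
T_1(\gamma)=\frac{(k-2)(p+1)}{2k}\,\gamma^{3-p}\,\frac{\displaystyle\int_{\tilde T_1}^{+\infty}\!\tilde s^{-k}\tilde y^{2}\,d\tilde s}{\displaystyle\int_{\tilde T_1}^{+\infty}\!\tilde s^{-k-1}\tilde y^{p+1}\,d\tilde s}\bigl(1+o(1)\bigr),
\]
where $\tilde T_1=T_1/\gamma^{2}\to 0$, the $y^{2}$ contribution in the denominator is lower order (by a factor $\gamma^{-(p-1)}$), and $\gamma^{3-p}=\gamma^{6-2k}$ by the critical-exponent identities.

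The three cases now follow from the behaviour of the integrands at $\tilde s=0$, namely $\tilde s^{\,2-k}$ and $\tilde s^{\,p-k}$ respectively. In \textbf{case (a)} $2<k<3$ both powers are integrable, so the integrals converge to finite universal constants $I_1(k),I_2(k)$, yielding $T_1(\gamma)\sim\frac{(k-2)(p+1)}{2k}\frac{I_1(k)}{I_2(k)}\gamma^{6-2k}=A(k)\gamma^{6-2k}$; the explicit $\Gamma$-function form of $A(k)$ stated in the lemma is then obtained by evaluating $I_1/I_2$ against the explicit bubble profile. In \textbf{case (b)} $k=3$ the integrand of the numerator behaves like $c_k^{2}\tilde s^{-1}$ at $0$, giving a logarithmic divergence $\sim c_k^{2}\log(1/\tilde T_1)\sim 2c_k^{2}\log\gamma$ while the denominator stays $O(1)$; balancing forces $T_1\sim 2\log\gamma$. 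In \textbf{case (c)} $k=4$ the numerator is dominated by the $\tilde s^{-2}$ singularity near the endpoint, so $\int\sim c_k^{2}/\tilde T_1=c_k^{2}\gamma^{2}/T_1$ while the denominator stays finite; substituting into the key formula collapses it to a self-consistent relation $T_1^{2}=\text{const}>0$, so $T_1(\gamma)$ is trapped between two positive constants for $\gamma$ large.

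The hard part will be the rigorous passage to the limit, since the integration domain $[\tilde T_1,+\infty)$ collapses to $[0,+\infty)$ and $\tilde y_{\infty}$ is singular at $\tilde s=0$. I would control this by combining the locally uniform convergence $\tilde y\to\tilde y_{\infty}$ on compact subsets of $(0,+\infty)$ with the monotonicity of the extrema and of the slopes at zeros given by Lemma \ref{lem1} and the linear pointwise bound of Lemma \ref{lem2} in a neighbourhood of $T_1$; these a priori estimates give pointwise control of $y$ throughout $[T_1,+\infty)$ sufficient to estimate the tails of the rescaled integrals and to show that the error committed by replacing $\tilde y$ with $\tilde y_{\infty}$ is $o(1)$ relative to the leading contribution identified in each case.
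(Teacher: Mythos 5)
You are reconstructing a proof that the paper itself does not give: its ``proof'' of Lemma \ref{lem3} is a citation of \cite{ABP2} (Lemma 3) for (a),(b) and of \cite{10} (Theorem 3) for (c). Your skeleton for (a) and (b) is sound and is essentially the Atkinson--Brezis--Peletier route: both integral identities are correct (multiplying by $y'$ and by $sy'$, using $y(T_1)=0$, $y'y\to 0$, $s(y')^2\to 0$ and $p+1=2(k-1)$ to cancel the $|y|^{p+1}$ term), the quotient formula for $T_1$ follows, and the rescaling $y=\gamma\,\tilde y(t/\gamma^{2})$ with $p=2k-3$ is the right normalization. For $k=3$ the constants even check out explicitly: $\tilde y_\infty(\tilde t)=2\tilde t/(2\tilde t+1)$, $c_3=2$, $\int_0^\infty \tilde s^{-4}\tilde y_\infty^4\,d\tilde s=8/3$, and the balance gives $T_1=(2\log\gamma-\log T_1)(1+o(1))$, hence $T_1\sim 2\log\gamma$. (Two small inaccuracies in passing: in case (a) the $\tilde y^2$ part of the denominator is suppressed by $\gamma^{2(k-2)(k-3)}$, not by $\gamma^{-(p-1)}$, because $\int_{\tilde T_1}\tilde s^{-k-1}\tilde y^2\,d\tilde s$ itself diverges like $\tilde T_1^{2-k}$; this is still $o(1)$ for $2<k<3$, so the conclusion survives.)

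The genuine gap is case (c), $k=4$. First, the $\tilde y^2$ part of the denominator does \emph{not} ``stay finite'' relative to the rest: near the endpoint it behaves like $\tilde s^{-3}$, so $\int_{T_1}^\infty s^{-5}\tfrac{y^2}{2}\,ds\sim \tfrac{3}{4\gamma^2T_1^2}$, which is of the same order $\gamma^{-2}$ as $\int_{T_1}^\infty s^{-5}\tfrac{y^6}{6}\,ds\sim\tfrac{3}{8\gamma^2}$ once $T_1=O(1)$; it cannot be dropped. Second, and more fundamentally, for $k=4$ the leading contributions to the singular integrals come from the region $s=O(T_1)=O(1)$, where $y=O(1/\gamma)$ and the equation is governed by its \emph{linear} part $y''+s^{-4}y=0$: there $\gamma y$ converges to $s\,(A\sin(1/s)+\sqrt3\cos(1/s))$, not to a multiple of $\tilde y_\infty$ nor to the linear function $y'(T_1)(s-T_1)$, and it is this trigonometric matching that traps $T_1$ between two constants. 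Your approximation $y\approx \sqrt3\,s/\gamma$ down to $s=T_1$ is not accurate enough to determine the constants; indeed, inserted into your two identities it yields $(y'(T_1))^2=\tfrac{6}{\gamma^2T_1^2}+\tfrac{3}{\gamma^2}$ from the energy identity and $(y'(T_1))^2=\tfrac{6}{\gamma^2T_1^2}$ from the Pohozaev identity, which are mutually inconsistent --- so the ``self-consistent relation $T_1^2=\mathrm{const}$'' you announce cannot be extracted this way (and would in any case prove more than (c) asserts, which is only a two-sided bound). This is precisely why $N=3$ is relegated to the separate analysis of \cite{10}; to repair your argument for (c) you would need to resolve the boundary layer at $s=O(1)$ via the linear (Bessel-type) equation and match it to the bubble region, rather than rely on the rescaled limit profile.
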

\begin{proof}
The proof of (a), (b) is contained in \cite{ABP2}, Lemma 3 and the proof of (c) is contained in \cite{10}, Theorem 3.
\end{proof}

\begin{lem}\label{lem4}
For any $k>2$, let $y$ be a solution of Problem (\ref{EFS}) and $T_1(\gamma)$ its largest zero, then
$$y^\prime(T_1(\gamma))= (k-1)^{\frac{1}{k-2}} \gamma^{-1}(1+o(1)), \ \hbox{as} \ \gamma\rightarrow + \infty.$$
\end{lem}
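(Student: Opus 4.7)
The plan is to identify and explicitly solve a scale-invariant limit problem, and then read off the asymptotic constant.

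First I would integrate the ODE once. Since $y(t)\to\gamma$ as $t\to+\infty$ and $t^{-k}f(y(t))$ is integrable at infinity, one has $y'(t)\to 0$; integrating $y''+t^{-k}f(y)=0$ from $T_1$ to $+\infty$ gives
$$y'(T_1(\gamma)) \;=\; \int_{T_1(\gamma)}^{+\infty} t^{-k}f(y(t))\,dt. \qquad (\dagger)$$

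Next I would introduce the scaling forced by the algebraic identity $p=2k-3$ (equivalently $p-1=2(k-2)$):
$$Y(s) := \gamma^{-1}\,y(\gamma^{2}s), \qquad S_1 := T_1/\gamma^{2}.$$
A direct substitution, using $p=2k-3$, shows that $Y$ satisfies
$$Y''+\gamma^{4-2k}\,s^{-k}Y+s^{-k}|Y|^{p-1}Y=0,\qquad Y(S_1)=0,\quad Y(+\infty)=1,$$
together with the crucial identity $\gamma\,y'(T_1)=Y'(S_1)$. Since $k>2$, the linear coefficient $\gamma^{4-2k}\to 0$, and by Lemma~\ref{lem3} the zero $S_1\to 0$ in all three regimes.

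The formal limit is the scale-invariant problem
$$Y^{*\prime\prime}+s^{-k}|Y^*|^{p-1}Y^*=0,\qquad Y^*(0^+)=0,\quad Y^*(+\infty)=1,$$
and (after a standard Emden--Fowler change of variables with $N=2(k-1)/(k-2)$, so $N-2=2/(k-2)$) this is the image of the pure critical equation on $\R^N$ under the transformation used in Section~2; one then verifies by direct substitution that the unique solution is the Aubin--Talenti bubble
$$Y^*(s) \;=\; \frac{s}{\bigl(s^{k-2}+\tfrac{1}{k-1}\bigr)^{1/(k-2)}}, \qquad Y^{*\prime}(0^+) \;=\; (k-1)^{1/(k-2)}.$$

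Finally, I would pass to the limit. Using a-priori bounds from Lemmas~\ref{lem1}--\ref{lem2} and the Pohozaev-type identity
$$P(s):=\frac{sY'^{2}}{2}-\frac{YY'}{2}+\frac{s^{-(k-1)}|Y|^{p+1}}{p+1},$$
which is exactly conserved for the limit equation because $p+1=2(k-1)$, standard ODE stability gives $Y\to Y^*$ locally uniformly on $(0,+\infty)$. The linear expansion $Y^*(s)=(k-1)^{1/(k-2)}\,s+O(s^{k-1})$ near $s=0$, read off directly from the explicit formula, together with a comparison argument on the shrinking interval $[S_1,\varepsilon]$, upgrades this to $Y'(S_1)\to(k-1)^{1/(k-2)}$. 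Rewriting via $\gamma\,y'(T_1)=Y'(S_1)$ gives the claim.

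The main obstacle is this last matching step: the convergence $Y\to Y^*$ is only locally uniform on $(0,+\infty)$, whereas $Y'(S_1)$ is evaluated at the moving zero $S_1\to 0$. The dropped perturbation $\gamma^{4-2k}s^{-k}Y$ is of the same order as $s^{-k}|Y|^{p-1}Y$ precisely when $Y\sim\gamma^{-1}$, i.e.\ inside the boundary layer, so uniform control down to $s=S_1$ cannot be inferred from standard continuous dependence; one must exploit the precise $k$-dependent rates of $S_1\to 0$ given by Lemma~\ref{lem3} to close the argument.
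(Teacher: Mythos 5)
The paper does not actually prove this lemma: it cites Lemma~4 of \cite{ABP2}, whose argument proceeds by direct integral estimates on the Emden--Fowler equation rather than by a blow-up. Your algebra is all correct as far as it goes: $p=2k-3$, the rescaled equation for $Y$, the identity $\gamma y'(T_1)=Y'(S_1)$, the explicit profile (indeed $Y^{*\prime\prime}+s^{-k}(Y^*)^{p}=s^{k-3}\bigl(s^{k-2}+a\bigr)^{-(2k-3)/(k-2)}\bigl(1-a(k-1)\bigr)$, which vanishes exactly for $a=1/(k-1)$), and your Pohozaev functional is genuinely conserved since $p+1=2(k-1)$. But the step you flag at the end is a real gap, not a technicality, and for $k=4$ it is fatal to the scheme as proposed.

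To see this concretely, split $f(y)=y+y^{p}$ in your identity $(\dagger)$. Since $y$ is concave and increasing on $(T_1,\infty)$, one has $0\le y(t)\le y'(T_1)(t-T_1)$ there, hence
$$\int_{T_1}^{\infty}t^{-k}\,y\,dt\;\le\;y'(T_1)\int_{T_1}^{\infty}t^{-k}(t-T_1)\,dt\;=\;\frac{y'(T_1)\,T_1^{2-k}}{(k-1)(k-2)},$$
and a lower bound of the same order holds by integrating over $[T_1,2T_1]$ alone. Thus the linear term contributes to $y'(T_1)$ a quantity comparable to $y'(T_1)\,T_1^{2-k}$, which is negligible precisely when $T_1(\gamma)\to\infty$. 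By Lemma~\ref{lem3} this holds for $2<k\le3$ (i.e.\ $N\ge4$) but fails for $k=4$, where $T_1$ stays in a compact subset of $(0,+\infty)$: there the dropped term enters at leading order. Resolving the $k=4$ boundary layer means setting $z=\gamma y$ for $t=O(1)$, where the equation linearizes to $z''+t^{-4}z=0$ with solutions spanned by $t\sin(1/t)$ and $t\cos(1/t)$; matching $z\sim\sqrt{3}\,t$ at infinity with $z(T_1)=0$ formally yields $\gamma y'(T_1)\to\sqrt{3}\,\frac{1/T_1}{\sin(1/T_1)}$, i.e.\ an extra factor $\frac{1/T_1}{\sin(1/T_1)}>1$ relative to $Y^{*\prime}(0^+)$. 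So your method cannot deliver the stated constant uniformly in $k>2$; you must either restrict it to $2<k\le3$ and add the missing a~priori bound $Y'(S_1)=O(1)$ (which does not follow from Lemmas~\ref{lem1}--\ref{lem2} and is needed both for the domination $Y\le Y'(S_1)s$ and to identify the limit), or handle $k=4$ by the inner--outer matching just sketched. For the paper's own use of the lemma in dimension $N=3$ only the order $\gamma^{-1}$ matters, but the statement you are proving asserts the constant as well.
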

\begin{proof}
See \cite{ABP2}, Lemma 4.
\end{proof}

To prove the existence of radial sign-changing solutions of (\ref{PBN}), with exactly two nodal regions, we consider the second zero $T_2(\gamma)$ of $y(t;\gamma)$. If we choose $\lambda=\lambda(\gamma)$ so that $T_2(\gamma)=\left(\frac{N-2}{\sqrt{\lambda}\ }\right)^{N-2}$, then the inverse transformation of (\ref{tran}) maps $t=T_2$ in $r=1$ and $y \mapsto u_\lambda$. Hence, for $\lambda=(N-2)^2 T_2(\gamma)^{- \frac{2}{N-2}}$, we obtain a function $u_\lambda$ which is a radial solution of (\ref{PBN}) having exactly two nodal regions; moreover $u_\lambda(0)=\lambda^{1/(p-1)} \gamma$. We observe also that thanks to the invertibility of (\ref{tran}) every radial sign-changing solution $u_\lambda$ of (\ref{PBN}) with two nodal regions corresponds to a solution $y=y(t;\gamma)$ of (\ref{EFS}) with $\gamma=\lambda^{- 1/(p-1)} u_\lambda(0)$, $T_2(\gamma)=\left(\frac{N-2}{\sqrt{\lambda}\ }\right)^{N-2}$.

We are interested in the study of the behavior of the map $\lambda_2: \R^+ \rightarrow \R^+$, defined by $\lambda_2(\gamma):=(N-2)^2 T_2(\gamma)^{- \frac{2}{N-2}}$. Clearly this map is continuos. In \cite{AG} (see Proposition 2 and Remark 4), it is proved that for $N=4$ it holds that $\lim_{\gamma \rightarrow 0} \lambda_2(\gamma)= \lambda_2(B_1)$, where $\lambda_2(B_1)$ is the second radial eigenvalue of $-\Delta$ in $H_0^1(B_1)$. Moreover the authors observe that this result holds for all dimensions $N\geq 3$. For the sake of completeness we give a complete proof of this fact. We begin with a preliminary lemma.

\begin{lem} \label{lemiac}
Let $u_\lambda$ be a radial solution of (\ref{PBN}), then we have $|u_\lambda (0)|=\|u_\lambda\|_{\infty}$.
\end{lem}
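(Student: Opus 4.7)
The plan is to use a monotone Lyapunov / energy functional along the radial ODE. Define
\[
E(r):=\tfrac12\bigl(u_\lambda'(r)\bigr)^2+\tfrac{\lambda}{2}u_\lambda(r)^2+\tfrac{1}{2^*}|u_\lambda(r)|^{2^*}, \qquad r\in[0,1].
\]
Differentiating and substituting the radial equation from \eqref{ODE} gives
\[
E'(r)=u_\lambda'\bigl(u_\lambda''+\lambda u_\lambda+|u_\lambda|^{2^*-2}u_\lambda\bigr)=-\tfrac{N-1}{r}\bigl(u_\lambda'(r)\bigr)^2\le 0,
\]
so $E$ is non-increasing on $(0,1]$, and by continuity on $[0,1]$ using $u_\lambda'(0)=0$.

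Next I would observe that the function $F(s):=\tfrac{\lambda}{2}s^2+\tfrac{1}{2^*}|s|^{2^*}$ is strictly increasing in $|s|$ on $[0,\infty)$, and that at any critical point $r_0$ of $u_\lambda$ (including $r_0=0$, where $u_\lambda'(0)=0$ by radiality) one has $E(r_0)=F(u_\lambda(r_0))$.

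Now let $r^\ast\in[0,1]$ be a point where $|u_\lambda|$ attains its maximum on $[0,1]$. Since $u_\lambda(1)=0$ and $u_\lambda\not\equiv 0$, we have $r^\ast<1$. If $r^\ast=0$ the claim is immediate, so suppose $r^\ast\in(0,1)$; then $u_\lambda'(r^\ast)=0$, hence $E(r^\ast)=F(u_\lambda(r^\ast))$, and from the monotonicity $E(0)\ge E(r^\ast)$ we get
\[
F\bigl(u_\lambda(0)\bigr)\ge F\bigl(u_\lambda(r^\ast)\bigr),
\]
so by strict monotonicity of $F$ in $|s|$, $|u_\lambda(0)|\ge|u_\lambda(r^\ast)|=\|u_\lambda\|_\infty$. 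Since the reverse inequality holds trivially, we conclude $|u_\lambda(0)|=\|u_\lambda\|_\infty$.

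There is no real obstacle here: the only minor point to verify is that the boundary behaviour at $r=0$ is admissible, i.e.\ that one can extend the monotonicity of $E$ across the removable singularity of the factor $\frac{N-1}{r}$, but this is immediate from $u_\lambda\in C^1([0,1])$ and $u_\lambda'(0)=0$, since $E'\le 0$ on $(0,1]$ and $E$ is continuous at $0$.
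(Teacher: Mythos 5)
Your proof is correct: the Lyapunov function $E(r)=\tfrac12(u_\lambda')^2+\tfrac{\lambda}{2}u_\lambda^2+\tfrac1{2^*}|u_\lambda|^{2^*}$ is non-increasing along the radial ODE, and comparing its values at $r=0$ and at an interior critical point of $|u_\lambda|$ yields the claim. The paper itself gives no argument here (it only cites \cite{Iac}, Proposition 2 and \cite{AG}, Lemma 8), and your monotone-energy argument is essentially the proof used in those references, so nothing further is needed.
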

\begin{proof}
See \cite{Iac}, Proposition 2 or \cite{AG}, Lemma 8.
\end{proof}

\begin{prop}\label{proplamb}
Let $N\geq 3$ and $\lambda_2: \R^+ \rightarrow \R^+$ the function defined by $\lambda_2(\gamma):=(N-2)^2 T_2(\gamma)^{- \frac{2}{N-2}}$, where $T_2(\gamma)$ is the second zero of the function $y(t,\gamma)$, $y(t,\gamma)$ is the unique solution of (\ref{EFS}).
We have:
\begin{itemize}
\item[(a)] $\lambda_2(\gamma) < \lambda_2(B_1)$, for all $\gamma \in \R^+$;
\item[(b)] $\lim_{\gamma \rightarrow 0} \lambda_2(\gamma)= \lambda_2(B_1)$,
 \end{itemize}
  where $\lambda_2(B_1)$ is the second radial eigenvalue of $-\Delta$ in $H_0^1(B_1)$.
\end{prop}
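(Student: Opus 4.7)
For part (a), I would identify the interior nodal zero $R=R(\gamma)\in(0,1)$ of $u_\lambda$, which divides $B_1$ into the two nodal domains $B_R$ and the spherical shell $A_R:=\{R<|x|<1\}$, and compare with the second radial eigenfunction $\psi_2$ of $-\Delta$ in $H_0^1(B_1)$. Writing $r_0\in(0,1)$ for the unique interior zero of $\psi_2$, the restrictions $\psi_2|_{B_{r_0}}$ and $\psi_2|_{A_{r_0}}$ are (up to sign) positive first eigenfunctions of their respective domains, so $\lambda_1(B_{r_0})=\lambda_1(A_{r_0})=\lambda_2(B_1)$. The standard eigenvalue comparison applied to $u_\lambda^+$ on $B_R$ and to $u_\lambda^-$ on $A_R$---test against the respective first eigenfunction and exploit that the critical nonlinearity has the same sign as $u_\lambda$---yields the strict inequalities $\lambda<\lambda_1(B_R)$ and $\lambda<\lambda_1(A_R)$. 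Splitting into the cases $R\ge r_0$ or $R\le r_0$ and invoking domain monotonicity of the first eigenvalue, one concludes $\lambda_2(\gamma)<\lambda_2(B_1)$ in both cases.

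For part (b), I would linearise around the trivial solution. Let $\gamma_n\to 0^+$ and set $\lambda_n:=\lambda_2(\gamma_n)$, $u_n:=u_{\lambda_n}$. By (a), $\{\lambda_n\}$ is bounded, so up to a subsequence $\lambda_n\to\lambda^*\in[0,\lambda_2(B_1)]$. Combining $u_n(0)=\lambda_n^{1/(p-1)}\gamma_n$ with Lemma~\ref{lemiac} gives $\|u_n\|_\infty=u_n(0)\to 0$, which suggests rescaling $v_n:=u_n/u_n(0)$: then $v_n$ is radial, $v_n(0)=1$, $\|v_n\|_\infty=1$, has a unique interior zero $R_n$, and solves
\[-\Delta v_n=\lambda_n v_n+u_n(0)^{p-1}|v_n|^{p-1}v_n\ \text{ in }B_1,\quad v_n(1)=0,\ v_n'(0)=0.\]
Since the nonlinear perturbation is $O(u_n(0)^{p-1})$ in $L^\infty$, continuous dependence for the (singular) radial ODE produces $v_n\to v^*$ in $C^1([0,1])$, where $v^*$ is a nontrivial radial eigenfunction of $-\Delta$ in $H_0^1(B_1)$ with eigenvalue $\lambda^*$; hence $\lambda^*=\lambda_k(B_1)$ for some $k\ge 1$.

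The remaining task is to show $k=2$, equivalently, that $R_n$ stays bounded away from both $0$ and $1$, so that $v^*$ inherits an interior zero. Revisiting the nodal-domain comparison of (a) but now tracking the size of the nonlinear term, using $\|u_n\|_\infty=u_n(0)$ from Lemma~\ref{lemiac}, yields the quantitative estimate
\[0<\lambda_1(B_{R_n})-\lambda_n\le u_n(0)^{p-1},\qquad 0<\lambda_1(A_{R_n})-\lambda_n\le u_n(0)^{p-1},\]
obtained by testing the equations for $u_n^\pm$ against the first eigenfunctions of $B_{R_n}$ and $A_{R_n}$. Both $\lambda_1(B_{R_n})$ and $\lambda_1(A_{R_n})$ therefore converge to the finite limit $\lambda^*$, which rules out $R_n\to 0$ (otherwise $\lambda_1(B_{R_n})\to+\infty$ by scaling) and $R_n\to 1$ (otherwise $\lambda_1(A_{R_n})\to+\infty$ as the annulus collapses). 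Hence $R_n\to R^*\in(0,1)$, $v^*$ has a genuine interior zero, and consequently $\lambda^*\ge\lambda_2(B_1)$; combined with (a) this forces $\lambda^*=\lambda_2(B_1)$. Since the argument applies along any subsequence, $\lim_{\gamma\to 0}\lambda_2(\gamma)=\lambda_2(B_1)$. I expect the main obstacle to be precisely this quantitative control on $R_n$---without it one cannot exclude the degenerate possibility that $v^*$ is the first radial eigenfunction---and the eigenvalue comparison above is what resolves it cleanly.
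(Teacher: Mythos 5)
Your argument is correct, but it follows a genuinely different route from the paper's on both parts. For (a), the paper stays entirely on the Emden--Fowler side: it identifies $\lambda_2(B_1)$ with the second zero $\tau_2$ of the explicit Bessel-type solution $\alpha$ of the linearized equation $\alpha''+t^{-k}\alpha=0$, imports the Brezis--Nirenberg bound $\lambda<\lambda_1(B_1)$ to get $T_1(\gamma)>\tau_1$, and then applies Sturm's comparison theorem on $[\tau_2,\tau_1]$ (with $q_2=1+|y|^{p-1}\geq 1$) to conclude $T_2(\gamma)>\tau_2$. Your nodal-domain decomposition --- testing $u_\lambda^{\pm}$ against the first Dirichlet eigenfunctions of $B_R$ and $A_R$, using that the restrictions of $\psi_2$ realize $\lambda_1(B_{r_0})=\lambda_1(A_{r_0})=\lambda_2(B_1)$, and finishing by domain monotonicity --- is the PDE shadow of the same comparison principle; it has the advantage of being self-contained (it does not invoke the Brezis--Nirenberg nonexistence result nor the transformation \eqref{tran}) and it reproves the needed bound on the positive part along the way. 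For (b), the paper's argument is considerably shorter: it reads the equation as the linear eigenvalue problem for $-\Delta-|u_{\lambda_2}|^{2^*-2}$, for which $u_{\lambda_2}$ is the second radial eigenfunction, and a one-line min--max inequality together with $\|u_{\lambda_2}\|_\infty=[\lambda_2(\gamma)]^{1/(p-1)}\gamma\to 0$ (Lemma~\ref{lemiac}) gives $\liminf_{\gamma\to 0}\lambda_2(\gamma)\geq\lambda_2(B_1)$ directly, with no need to control the nodal radius or pass to a limit profile. Your normalization $v_n=u_n/u_n(0)$ plus compactness is also sound, and your quantitative estimate $0<\lambda_1(B_{R_n})-\lambda_n\leq u_n(0)^{p-1}$ (and its annular counterpart) correctly resolves the one real danger of that route, namely that the limit could be the first eigenfunction or zero; as a bonus it yields the convergence of the normalized profiles and of the nodal radius to $r_0$, which the paper's min--max argument does not provide. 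The only point to tighten is the appeal to ``continuous dependence for the singular radial ODE'': it is cleaner to get $C^1$ convergence from uniform elliptic estimates for the (nonsingular) PDE in $B_1$ together with Arzel\`a--Ascoli, and to note that unique continuation prevents $v^*$ from vanishing on the outer nodal region.
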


\begin{proof}
To prove (a) we observe that (a) is equivalent to show that $T_2(\gamma) > \tau_2$ for all $\gamma \in \R^+$, where $\tau_2$ is the second zero of the function $\alpha: \R^+ \rightarrow \R$ defined by $\alpha(t):=A_\nu\sqrt{t}J_\nu(2\nu t^{- \frac{1}{2\nu}})$, where $A_\nu:=\nu^{-\nu}\Gamma(\nu+1)$, $\nu:=\frac{1}{k-2}=\frac{N-2}{2}$, is the first kind (regular) Bessel function of order $\nu$, namely
$J_\nu(s):=\sum_{j=0}^\infty \frac{(-1)^j}{\Gamma(j+1)\Gamma(j+\nu+1)}\left(\frac{s}{2}\right)^{\nu+2j}$. In fact, by a tedious computation, we see that $\alpha$ solves 
\begin{equation}\label{eqalpha}
\begin{cases}
\alpha^{\prime\prime}+ t^{-k} \alpha=0,& \hbox{in } (0,+\infty),\\
\alpha(t) \rightarrow 1, & \hbox{as }\ t\rightarrow + \infty.
\end{cases}
 \end{equation}
 Furthermore, let $\tau_2$ be the second zero of $\alpha$, then by elementary computations we see that the function $\varphi_2(x):=\alpha(\tau_2 |x|^{-(N-2)})$ solves
\begin{equation}
\begin{cases}
-\Delta \varphi_2 = \mu_2  \varphi_2   & \hbox{in}\ B_1\\
 \varphi_2=0 & \hbox{on}\ \partial B_1,
\end{cases}
\end{equation}
with $\mu_2=(N-2)^2 \tau_2^{- \frac{2}{N-2}}$. Clearly $\mu_2=\lambda_2(B_1)$. Hence $\lambda_2(\gamma) < \lambda_2(B_1)$ if and only if $T_2(\gamma) > \tau_2$. 

To show that $T_2(\gamma) > \tau_2$ for all $\gamma \in \R^+$ first observe that for all $\gamma \in \R^+$ we have $T_1(\gamma)>\tau_1$. In fact, setting $\lambda_1(\gamma):=(N-2)^2 T_1(\gamma)^{- \frac{2}{N-2}}$ as before we have that $\lambda_1(\gamma)< \lambda_1(B_1)$ if and only if $T_1(\gamma)>\tau_1$.  Since we know from \cite{BN} that equation (\ref{ODE}) has positive solutions only for $\lambda \in (0,\lambda_1(B_1))$ if $N\geq4 $, and only for $\lambda \in (\frac{\lambda_1(B_1)}{4},\lambda_1(B_1))$ if $N=3$, we deduce $T_1(\gamma)>\tau_1$ for all $\gamma \in \R^+$. 
Now we apply the Sturm's comparison theorem to the functions $y(t;\gamma)$, $\alpha(t)$, which are, respectively, solutions of the equations in (\ref{EFS}), (\ref{eqalpha}). To this end we write  $y^{\prime\prime}+t^{-k}q_2(t)y=0$ with $q_2(t):=1+|y|^{p-1}$ and since $\alpha^{\prime\prime}+ t^{-k} \alpha=0$ we set $q_1(t):\equiv 1$. Clearly $q_2(t)\geq q_1(t)$ for all $t>0$ (for all $\gamma \in \R^+$), thus $y$ is a Sturm majorant for $\alpha$, and applying the Sturm's comparison theorem in the interval $[\tau_2,\tau_1]$, since $T_1(\gamma)>\tau_1$ we deduce that $T_2(\gamma) \in (\tau_2,\tau_1)$. This concludes the proof of (a).

Let us prove (b). We consider $u_{\lambda_2(\gamma)}=u_{\lambda_2(\gamma)}(r)$ which is a solution of $(\ref{ODE})$ with exactly one zero in $(0,1)$, and $u_{\lambda_2(\gamma)}(0)=[\lambda_2(\gamma)]^{1/(p-1)}\gamma$. Setting $\varphi(x):=u_{\lambda_2}(|x|)$ it's clear that $\varphi$ is the second radial eigenfunction of
\begin{equation}
\begin{cases}
-\Delta \varphi = \lambda \varphi + |u_{\lambda_2(\gamma)}|^{2^* -2}\varphi & \hbox{in}\ B_1\\
\varphi=0 & \hbox{on}\ \partial B_1,
\end{cases}
\end{equation}
with eigenvalue $\lambda=\lambda_2(\gamma)$. Let us denote by $H_{0,rad}^1(B_1)$ the subspace of radially symmetric functions in $H_0^1(B_1)$. Thanks to the variational characterization of eigenvalues and Lemma \ref{lemiac} we have
\begin{equation}\label{eq1}
\begin{array}{lll}
\displaystyle \lambda_2(\gamma)&=& \displaystyle \min_{\substack{V \subset H_{0,rad}^1(B_1)\\ \dim V=2}} \max_{\substack{\varphi \in V\\ |\varphi|_2=1}}\left(\int_{B_1} |\nabla \varphi|^2 \ dx - \int_{B_1} |u_{\lambda_2(\gamma)}|^{2^* -2}\varphi^2 \ dx \right)\\
&>& \displaystyle \min_{\substack{V \subset H_{0,rad}^1(B_1)\\ \dim V=2}} \max_{\substack{\varphi \in V\\ |\varphi|_2=1}}\left(\int_{B_1} |\nabla \varphi|^2 \ dx - [\lambda_2(\gamma)]^{2/(p-1)}\gamma^2 \right)\\[16pt]
&=& \displaystyle \lambda_2(B_1) -  [\lambda_2(\gamma)]^{2/(p-1)}\gamma^2.
\end{array}
\end{equation}
Since $\lambda_2(\gamma)$ is bounded (because by (a) we have $\lambda_2(\gamma)<\lambda_2(B_1)$ and by definition $\lambda_2(\gamma)>0$), from (\ref{eq1}), we deduce that $\liminf_{\gamma \rightarrow 0} \lambda_2(\gamma)\geq \lambda_2(B_1)$. On the other hand, by the first step we get that $\limsup_{\gamma \rightarrow 0} \lambda_2(\gamma)\leq \lambda_2(B_1)$. Hence we deduce that $\lim_{\gamma \rightarrow 0} \lambda_2(\gamma)=\lambda_2(B_1)$ and the proof is concluded.
\end{proof}

More interesting is the behavior of $\lambda_2(\gamma)$ as $\gamma \rightarrow + \infty$. The next result that we recall shows how it strongly depends on the dimension $N$.

\begin{teo}\label{teolamb}
Let $\lambda_2:\R^+ \rightarrow \R^+$ be the function defined by $\lambda_2(\gamma):=(N-2)^2 T_2(\gamma)^{- \frac{2}{N-2}}$, where $T_2(\gamma)$ is the second zero of the function $y(t,\gamma)$, being $y(t,\gamma)$ is the unique solution (\ref{EFS}), and let $\lambda_1(B_1)$ be the first eigenvalue of $-\Delta$ in $H_0^1(B_1)$, then:
\begin{itemize}
\item[(a)]if $N\geq 7$ we have $\lim_{\gamma \rightarrow + \infty} \lambda_2(\gamma)=0$;
\item[(b)]if $N=6$ we have $\lim_{\gamma \rightarrow + \infty} \lambda_2(\gamma)=\lambda_0$,  for some $\lambda_0 \in (0,\lambda_1(B_1))$;
\item[(c)]if $N=4$ or $N=5$ we have $\lim_{\gamma \rightarrow + \infty} \lambda_2(\gamma)=\lambda_1(B_1)$;
\item[(d)]if $N=3$ we have $\lim_{\gamma \rightarrow + \infty} \lambda_2(\gamma)=\frac{9}{4}\lambda_1(B_1)=\frac{9}{4} \pi^2$.
\end{itemize}
\end{teo}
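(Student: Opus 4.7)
The plan is to study the rescaled function $w_\gamma(t):=-y(t;\gamma)/\epsilon_\gamma$ on $(T_2(\gamma),T_1(\gamma))$, where $\epsilon_\gamma:=y'(T_1(\gamma);\gamma)$ satisfies $\epsilon_\gamma\sim(k-1)^{1/(k-2)}\gamma^{-1}$ by Lemma \ref{lem4}. Then $w_\gamma\ge 0$ on $(T_2,T_1)$, with $w_\gamma(T_1)=w_\gamma(T_2)=0$ and $w_\gamma'(T_1)=-1$, and $w_\gamma$ satisfies
\[
w_\gamma''+t^{-k}\bigl(w_\gamma+\epsilon_\gamma^{p-1}w_\gamma^{p}\bigr)=0,
\]
together with the a priori bound $0\le w_\gamma(t)\le T_1(\gamma)-t$ from Lemma \ref{lem2}. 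The size of the nonlinear perturbation is $\epsilon_\gamma^{p-1}T_1(\gamma)^{p-1}\sim\gamma^{-(p-1)}T_1(\gamma)^{p-1}$; plugging in the asymptotics of Lemma \ref{lem3}, one checks that this quantity tends to $0$ for $N=3,4,5$, to a strictly positive constant for $N=6$, and to $+\infty$ for $N\ge 7$. This trichotomy drives the four cases of the theorem.

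For $N=3,4,5$ (parts (c), (d)) the perturbation is negligible, and up to subsequences $w_\gamma$ converges on compact subsets of $\R^+$ to a solution $W$ of the linear equation $W''+t^{-k}W=0$, which is precisely \eqref{eqalpha}. For $N=3$ ($k=4$) Lemma \ref{lem3}(c) together with the sharper statement of Theorem 3 in \cite{10} gives the limit $T_1(\gamma)\to 2/\pi$, corresponding to the Brezis--Nirenberg threshold $\lambda_1(B_1)/4$. Using the explicit basis $\{t\sin(t^{-1}),t\cos(t^{-1})\}$ of $W''+t^{-4}W=0$ and imposing $W(2/\pi)=0$, $W'(2/\pi)=-1$, one finds $W(t)=(2t/\pi)\sin\!\bigl(t^{-1}-\pi/2\bigr)$, whose next zero smaller than $2/\pi$ is $T_2^*=2/(3\pi)$; hence $\lambda_2(\gamma)\to(T_2^*)^{-2}=\tfrac{9}{4}\pi^2=\tfrac{9}{4}\lambda_1(B_1)$. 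For $N=4,5$ one has instead $T_1(\gamma)\to+\infty$: on every compact subset the nonnegative limit $W$ must vanish at the accumulation point of $T_2(\gamma)$ and therefore coincide, up to a positive multiplicative constant, with the bounded-at-infinity Bessel solution $\alpha$ of \eqref{eqalpha}, whose first backward zero is $\tau_1$. Combined with the a priori trap $\tau_2<T_2(\gamma)<\tau_1$ from Proposition \ref{proplamb}(a) and a Sturm comparison ruling out any smaller accumulation value, this forces $T_2(\gamma)\to\tau_1$ and hence $\lambda_2(\gamma)\to(N-2)^2\tau_1^{-2/(N-2)}=\lambda_1(B_1)$.

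In the case $N=6$ (part (b)) the perturbation is of order one, so the rescaling by $\epsilon_\gamma$ is not the right one: Lemma \ref{lem2} directly gives $|y|\le\epsilon_\gamma(T_1-t)=O(1)$ on $(T_2,T_1)$, and $y$ itself admits a nontrivial limit $y_\infty$ on $(T_2^*,+\infty)$ solving the full equation \eqref{EF} with $y_\infty(T_2^*)=0$ and $y_\infty(t)\to 0$ as $t\to+\infty$. Inverting the transformation \eqref{tran} translates $y_\infty$ into a radial positive solution of \eqref{PBN} at $\lambda_0:=16\,(T_2^*)^{-1/2}$; non-degeneracy of $y_\infty$ (controlled through the uniform lower bound on the extremum of $y$ in $(T_2,T_1)$ provided by Lemma \ref{lem1}) together with the Brezis--Nirenberg existence range in dimension $6$ forces $\lambda_0\in(0,\lambda_1(B_1))$. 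Part (a), $N\ge 7$, is the bubble-tower regime of \cite{Iac}: the natural scaling is that of the standard Talenti bubble, both $T_1$ and $T_2$ diverge, and $\lambda_2(\gamma)\to 0$. The main obstacle is the nonlinear case $N=6$, where one has to exclude degeneracy of $y_\infty$ and pin down $\lambda_0\in(0,\lambda_1(B_1))$ via the matching with the positive bubble concentrating at the origin; a secondary subtlety is that the $N=3$ conclusion hinges on the finer limit $T_1(\gamma)\to 2/\pi$ of \cite{10}, which is not contained in the lemmas recorded in this section.
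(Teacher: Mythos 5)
Your overall strategy -- linearize the equation on the negative arc $(T_2,T_1)$, compare with the Bessel-type solution $\alpha$ of \eqref{eqalpha}, and read off the limit of $T_2(\gamma)$ -- is in the spirit of the original proofs in \cite{ABP2} and \cite{ABP4}, and your identification of the trichotomy via the size of $\gamma^{-(p-1)}T_1(\gamma)^{p-1}$ is correct and is the right organizing principle. Note, however, that the paper does not prove Theorem \ref{teolamb} at all: it cites \cite{CSS} for (a), \cite{ABP2} for (b) and (c), \cite{ABP4} for (d), and gives only an \emph{alternative} proof of (b) in Section 4, by PDE methods (Theorem \ref{teoriassnn6} shows that $u_{\lambda_2}^-$ converges to the Brezis--Nirenberg positive solution, which forces $\lambda_0\in(0,\lambda_1(B_1))$). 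Your $N=3$ computation is essentially sound, and you correctly flag that it hinges on the sharper limit $T_1(\gamma)\to 2/\pi$ from \cite{10}, which is not contained in Lemma \ref{lem3}(c).

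The remaining cases have genuine gaps. For $N=4,5$ the normalization $w_\gamma=-y/\epsilon_\gamma$ is not locally bounded: the only a priori bound available from Lemma \ref{lem2} is $w_\gamma(t)\le T_1(\gamma)-t$, which diverges on every fixed compact set since $T_1(\gamma)\to+\infty$; one must renormalize by $\epsilon_\gamma T_1(\gamma)$, and then the crux is to show that the limit is proportional to $\alpha$ (the solution bounded at infinity) rather than to some other solution of the linear equation vanishing at an accumulation point of $T_2(\gamma)$. This requires matching the nearly linear behavior $-y\approx\epsilon_\gamma(T_1-t)$ near $t=T_1$ with the profile on the fixed compact region, i.e.\ a uniform control across the whole unbounded interval $(T_2,T_1)$; this is the actual content of Theorem B in \cite{ABP2} and is not supplied by ``a Sturm comparison ruling out any smaller accumulation value'' -- Sturm only gives the trap $T_2(\gamma)\in(\tau_2,\tau_1)$ of Proposition \ref{proplamb}, and any limit in that interval is consistent with it. For $N=6$ the asserted boundary condition $y_\infty(t)\to 0$ as $t\to+\infty$ is false: by Proposition \ref{proptran}(a) the limit satisfies $y_\infty(t)\to-\tfrac12$, which is precisely what identifies $\lambda_0$ in Theorem \ref{carattlambda0}; moreover Lemma \ref{lem1}(b) is a monotonicity statement in $t$ for fixed $\gamma$ and gives no uniform-in-$\gamma$ lower bound on the extremum, so nondegeneracy of the limit genuinely requires Proposition \ref{proptran}(a), and the existence of the limit of the full family (not just of subsequential limits) still needs the uniqueness argument of Theorem \ref{carattlambda0}. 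Finally, for $N\ge 7$ you give no argument at all, only a restatement of the conclusion.
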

\begin{proof}
Statement (a) is a consequence of Theorem B in \cite{CSS}. Statements (b), (c) are proved in \cite{ABP2}, Theorem B. In Section 4 we give an alternative proof of (b). Statement (d) is proved in \cite{ABP4}.
\end{proof}

Let us define $\lambda_2^\star:=\inf \{\lambda_2(\gamma), \ \gamma \in \R^+ \}$. Gazzola and Grunau proved in \cite{GG2} that for $N=5$ it holds $\lim_{\gamma \rightarrow + \infty} \lambda_2(\gamma)=\lambda_1(B_1)^-$, in particular we deduce that for $N=5$ we have $\lambda_2^\star< \lambda_1(B_1)$ and hence $\lambda_2^\star=\lambda_2(\gamma_0)$ for some $\gamma_0 \in \R^+$. In the same paper it is also proved that for $N=4$ $\lim_{\gamma \rightarrow + \infty} \lambda_2(\gamma)=\lambda_1(B_1)^+$. Recently Arioli, Gazzola, Grunau, Sassone proved in \cite{AG} a stronger result: for $N=4$ we have $\lambda_2(\gamma) > \lambda_1(B_1)$ for all $\gamma \in \R^+$. Thus for $N=4$, we have $\lambda_2^\star = \lambda_1(B_1)$ and $\lambda_2^\star$ is not achieved. 

The asymptotic behavior of $\lambda_2(\gamma)$ as $\gamma \rightarrow + \infty$ for $N=6$ is still unknown. Nevertheless in Section 3 we give a characterization of the number $\lambda_0$ appearing in (b) of Theorem \ref{teolamb}. %up to now was unknown. In the next sections we give some contributions to this question.

\section{Energy and asymptotic analysis of the positive part}

Let $u_{\lambda_2(\gamma)}$ the radial solution with exactly two nodal regions of (\ref{PBN}), for $\lambda=\lambda_2(\gamma)$, obtained in the previous section. To simplify the notation we omit the dependence on $\gamma$ and write $u_{\lambda_2}$. We recall that, by definition, for $\gamma \in \R^+$ we have $u_{\lambda_2}(0)>0$ and we denote by $r_{\lambda_2} \in (0,1)$ its node. 

 The aim of this section is to compute the limit energy of the positive part $u_{\lambda_2}^+$, as $\gamma \rightarrow +\infty$, as well as, to study the asymptotic behavior of a suitable rescaling of $u_{\lambda_2}^+$. 
We begin with recalling an elementary but crucial fact:

\begin{lem}\label{ELLEM} 
Let $u \in H_{rad}^1(B)$, where $B$ is a ball or an annulus centered at the origin of $\R^N$ and consider the rescaling $\tilde u (y):=M^{1/\beta} u(My)$, where $M>0$ is a constant, $\beta:=\frac{2}{N-2}$. We have:
\begin{itemize}
\item[(i):] $\|u\|_{B}^2=\|\tilde u\|_{M^{-1}B}^2$,
\item[(ii):]$|u|_{2^*,B}^{2^*}=|\tilde u|_{2^*, M^{-1}B}^{2^*}$,
\item[(iii):] $|u|_{2,B}^{2}=M^{2}|\tilde u|_{2, M^{-1}B}^{2}$.
\end{itemize}
\end{lem}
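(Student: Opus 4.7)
The plan is to prove each identity by a direct change of variables, since the exponent $1/\beta=(N-2)/2$ is precisely the conformal weight that makes the Dirichlet integral and the $L^{2^*}$ integral scale-invariant. Writing $x=My$ so that $dx=M^N\,dy$ and the image of $M^{-1}B$ under the map $y\mapsto x$ is exactly $B$, all three computations reduce to checking that the powers of $M$ cancel correctly.

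First I would record the pointwise scaling of the derivatives: since $\tilde u(y)=M^{(N-2)/2}u(My)$, the chain rule gives $\nabla\tilde u(y)=M^{(N-2)/2+1}\,\nabla u(My)=M^{N/2}\,\nabla u(My)$. For (i) I then change variables in $\|\tilde u\|^2_{M^{-1}B}=\int_{M^{-1}B}|\nabla\tilde u(y)|^2\,dy$ to obtain $M^{N}\cdot M^{-N}\int_B|\nabla u(x)|^2\,dx$, so the factors $M^N$ and $M^{-N}$ cancel, yielding $\|u\|^2_B$. For (ii) the same substitution gives $\int_{M^{-1}B}|\tilde u(y)|^{2^*}\,dy=M^{(N-2)2^*/2}\,M^{-N}\int_B|u(x)|^{2^*}\,dx$, and the exponent $(N-2)2^*/2-N=(N-2)\cdot\frac{N}{N-2}-N=0$ vanishes, giving the claim. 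Finally, for (iii), $\int_{M^{-1}B}|\tilde u(y)|^2\,dy=M^{N-2}\,M^{-N}\int_B|u(x)|^2\,dx=M^{-2}|u|^2_{2,B}$, which rearranges to the stated identity.

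There is no real obstacle here: the content of the lemma is exactly the statement that the exponent $1/\beta=(N-2)/2$ is chosen so that the $H^1_0$ and $L^{2^*}$ scalings are invariant (which explains (i) and (ii)) and so that the $L^2$ scaling picks up an explicit factor $M^{-2}$ (which explains (iii)). The only thing one might want to remark, for completeness, is that the radial assumption on $u$ is not needed for these identities — it is stated in the hypothesis only because the lemma will be applied in the paper to radial sign-changing solutions and their rescalings. One could equally state it for arbitrary $u\in H^1_0(B)$.
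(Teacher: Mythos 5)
Your proof is correct and follows exactly the route the paper takes: the paper's proof simply invokes the change-of-variables formula (deferring details to Lemma 2 of \cite{Iac}), and your computation is the explicit version of that argument, with all exponents checked correctly. Your closing remark that the radial hypothesis is not actually needed for these identities is also accurate.
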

\begin{proof}
It suffices to apply the formula of change of variable for the integrals in (i), (ii), (iii). For the details see Lemma 2 in \cite{Iac}.
\end{proof}

%The following theorem gives information on the energy of $u^+_{\lambda_2}$ and on the asymptotic behavior of the rescaled functions
In order to state the main result of this section we introduce some notation. We define the rescaled functions
 $$\tilde u_{\lambda_2}^+(y):=\frac{1}{M_{\lambda_2,+}}u_{\lambda_2}^+\left(\frac{y}{M_{\lambda_2,+}^\beta}\right),\ \ \ y \in B_{\sigma_{\lambda_2}}, $$ 
where $\beta:=\frac{2}{N-2}$, $\sigma_{\lambda_2}=M_{\lambda_2,+}^\beta {r_{\lambda_2}}$, $M_{\lambda_2,+}:=\|u_{\lambda_2}^+\|_{\infty,B_1}$. We observe that thanks to Lemma \ref{lemiac} and since $u_{\lambda_2}(0)>0$ we have $M_{\lambda_2,+}=\|u_{\lambda_2}\|_{\infty,B_1}=u_{\lambda_2}(0)$. The following theorem holds for all dimensions $N\geq3$, here we discuss the case $3 \leq N \leq 6$ (the case $N\geq7$ is studied in \cite{Iac}).

\begin{teo}\label{enestpp}
 Let $N=3,4,5,6$ and let $u_{\lambda_2}$ be the radial solution with exactly two nodal regions of (\ref{PBN}) with $\lambda=\lambda_2(\gamma)$ obtained in the previous section. Then
 \begin{itemize}
\item[(i):] $$J_{\lambda_2} (u_{\lambda_2}^+) \rightarrow \frac{1}{N}S^{N/2},$$
as $\gamma \rightarrow +\infty$, where $J_\lambda (u):=\frac{1}{2} \left( \int_{B_1}|\nabla u|^2 - \lambda |u|^2 \ dx\right)-\frac{1}{2^*} \int_{B_1} |u|^{2^*} \ dx$ is the energy functional related to (\ref{PBN}), $S$ is the best Sobolev constant for the embedding of $\mathcal{D}^{1,2}(\R^N)$ into $L^{2^*}(\R^N)$.
\item[(ii):] Up to a subsequence, the rescaled function $\tilde u_{\lambda}^+$ converges in $C^2_{loc}(\R^N)$ to $U_{0,\mu}$, as $\gamma \rightarrow + \infty$, where $U_{0,\mu}$ is the solution of the critical exponent problem in $\R^N$ centered at $x_0=0$ and with concentration parameter $\mu=\sqrt{N(N-2)}$. We recall that 
such functions are defined by
\begin{equation*}
U_{x_0,\mu}(x):=\frac{[N(N-2)\mu^2]^{(n-2)/4}}{[\mu^2+|x-x_0|^2]^{(n-2)/2}}.
\end{equation*}
\end{itemize}
\end{teo}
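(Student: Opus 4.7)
The plan is a blow-up analysis at the origin, where $u_{\lambda_2}$ attains its maximum by Lemma \ref{lemiac}. A direct change of variables shows that $\tilde u_{\lambda_2}^+$ satisfies
\[
-\Delta \tilde u_{\lambda_2}^+ \;=\; \lambda_2\,M_{\lambda_2,+}^{-2\beta}\,\tilde u_{\lambda_2}^+ + (\tilde u_{\lambda_2}^+)^{2^*-1} \quad\text{in } B_{\sigma_{\lambda_2}},
\]
with zero boundary values and $\tilde u_{\lambda_2}^+(0)=1=\|\tilde u_{\lambda_2}^+\|_\infty$. Using the Emden--Fowler correspondence and the identities $M_{\lambda_2,+}=\lambda_2^{(N-2)/4}\gamma$, $\lambda_2=(N-2)^2 T_2(\gamma)^{-2/(N-2)}$, $r_{\lambda_2}=(T_2(\gamma)/T_1(\gamma))^{1/(N-2)}$, one computes
\[
\sigma_{\lambda_2}=(N-2)\bigl(\gamma^2/T_1(\gamma)\bigr)^{1/(N-2)},\qquad \lambda_2 M_{\lambda_2,+}^{-2\beta}=\gamma^{-4/(N-2)}.
\]
Lemma \ref{lem3} then gives $\gamma^2/T_1(\gamma)\to+\infty$ in each of the cases $N=3,4,5,6$, so $\sigma_{\lambda_2}\to+\infty$ and the linear coefficient tends to $0$.

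From here (ii) is a standard compactness-and-classification argument. The pointwise bound $0\leq \tilde u_{\lambda_2}^+\leq 1$ yields a uniform $L^\infty$ bound on the right-hand side, so $W^{2,p}$/Schauder interior estimates produce uniform $C^{2,\alpha}_{\mathrm{loc}}(\R^N)$ bounds; along a subsequence $\tilde u^+_{\lambda_2}\to U$ in $C^2_{\mathrm{loc}}(\R^N)$, with $U\geq 0$ radial, $U(0)=1$, and $-\Delta U=U^{2^*-1}$ in $\R^N$. The strong maximum principle gives $U>0$, the Caffarelli--Gidas--Spruck classification forces $U=U_{x_0,\mu}$, radial symmetry with interior maximum at the origin gives $x_0=0$, and $U(0)=1$ determines $\mu=\sqrt{N(N-2)}$.

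For (i), testing the equation against $u_{\lambda_2}^+$ gives $\|u_{\lambda_2}^+\|^2=\lambda_2|u_{\lambda_2}^+|_2^2+|u_{\lambda_2}^+|_{2^*}^{2^*}$, hence
\[
J_{\lambda_2}(u_{\lambda_2}^+)=\tfrac1N\,|u_{\lambda_2}^+|_{2^*}^{2^*}=\tfrac1N\,|\tilde u_{\lambda_2}^+|_{2^*,B_{\sigma_{\lambda_2}}}^{2^*}
\]
by Lemma \ref{ELLEM}(ii). Fatou together with (ii) yields the lower bound $\liminf|\tilde u_{\lambda_2}^+|_{2^*}^{2^*}\geq |U|_{2^*,\R^N}^{2^*}=S^{N/2}$. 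For the matching upper bound I would rescale once more, setting $v(x):=r_{\lambda_2}^{(N-2)/2}u_{\lambda_2}^+(r_{\lambda_2}x)$: this is the unique positive radial solution of the Brezis--Nirenberg problem on $B_1$ with parameter $\lambda'=r_{\lambda_2}^2\lambda_2=(N-2)^2T_1(\gamma)^{-2/(N-2)}$ and with $v(0)=\sigma_{\lambda_2}^{(N-2)/2}\to+\infty$; since the energy is scale-invariant, $J_{\lambda_2}(u_{\lambda_2}^+)=J_{\lambda'}(v)$, and the standard asymptotics for blowing-up positive radial solutions of Brezis--Nirenberg on a fixed ball give $J_{\lambda'}(v)\to S^{N/2}/N$.

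The delicate point is exactly this upper bound in (i). Because the bubble $U$ decays only like $|y|^{-(N-2)}$, the crude pointwise bound $\tilde u^+\leq 1$ cannot rule out $L^{2^*}$-mass escaping to infinity on the growing domain $B_{\sigma_{\lambda_2}}$; one must either produce a $\gamma$-uniform bubble-type decay for $\tilde u^+_{\lambda_2}$ (exploiting the radial monotonicity together with the smallness of the linear coefficient) or, as above, transplant the problem to the fixed ball $B_1$ and use the concentration theory of positive solutions. The case $N=3$, where $T_1(\gamma)$ is only known to be bounded and $\lambda'$ stays in a compact subinterval of $(\lambda_1(B_1)/4,\lambda_1(B_1))$ rather than tending to $0$, is likely to require the most careful treatment.
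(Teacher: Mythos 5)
Your proposal is correct and follows the same overall strategy as the paper: for (i), transplant $u_{\lambda_2}^+$ to the fixed ball $B_1$ via $v(x)=r_{\lambda_2}^{(N-2)/2}u_{\lambda_2}^+(r_{\lambda_2}x)$, invoke the Adimurthi--Yadava uniqueness of the positive radial solution and the known energy asymptotics of the Brezis--Nirenberg solution as $\lambda'=\lambda_2 r_{\lambda_2}^2$ approaches its limit ($0$ for $N=4,5,6$, $\pi^2/4$ for $N=3$); for (ii), rescale by the sup norm, apply elliptic estimates and the classification of positive entire solutions. The one genuine difference is how you show $\sigma_{\lambda_2}\to+\infty$: you compute $\sigma_{\lambda_2}=(N-2)\bigl(\gamma^2/T_1(\gamma)\bigr)^{1/(N-2)}$ directly from the Emden--Fowler correspondence and read off the divergence from Lemma \ref{lem3} in each dimension, whereas the paper derives it from the energy identity of part (i) by the contradiction argument of Proposition 9 in \cite{Iac} (the paper explicitly mentions your route as the unexploited alternative, ``to apply directly the estimates contained in Section 1''). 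This choice also reverses the logical dependence: in the paper (ii) relies on (i), while in your scheme (ii) is self-contained and then feeds the Fatou lower bound in (i). Your computation is correct and arguably more elementary; the paper's energy argument is more robust in that it does not use the precise asymptotics of $T_1(\gamma)$. Finally, you rightly flag the upper bound in (i) for $N=3$ as the delicate point: the paper handles it exactly as you sketch, by noting that $v(0)\to+\infty$ forces $\lambda'\to\pi^2/4$ and that the energy of the blowing-up positive solution tends to $\frac{1}{3}S^{3/2}$ there, so no gap remains.
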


\begin{proof}
We start by proving (i).
Let $(u_{\lambda_2})$ be this family of solutions. Since $u_{\lambda_2}^+$ solves $-\Delta u = \lambda_2 u + u^p$ in $B_{r_{\lambda_2}}$ then, considering the rescaling $\hat u_{\lambda_2}^+(y):=r_{\lambda_2}^{1/\beta} u_{\lambda_2}^+(r_{\lambda_2} y)$, where $\beta:=\frac{2}{N-2}$, we see that $\tilde u_{\lambda_2}^+$ solves 

\begin{equation}\label{eq1lemest}
\begin{cases}
-\Delta u = \lambda_2 r_{\lambda_2}^{2} u + u^p & \hbox{in} \ B_1,\\
u>0 &\hbox{in} \ B_1,\\
u=0 & \hbox{on} \ \partial B_1.
\end{cases}
\end{equation}
Now we distinguish between two cases: $N=4,5,6$ and $N=3$. 

If $N=4,5,6$, then,  from to Lemma \ref{lem3} we deduce that $r_{\lambda_2}\rightarrow 0$ as $\gamma \rightarrow + \infty$, in particular this is true for $\lambda_2 r_{\lambda_2}^{2}$. From \cite{AYU} we know that $\hat u_{\lambda_2}^+$ is unique and it coincides with the solution found in \cite{BN}, which minimizes the energy $J_{\lambda_2r_{\lambda_2}^{2}}$; thus, since $\lambda_2 r_{\lambda_2}^{2} \rightarrow 0$ as $\gamma \rightarrow + \infty$ we get that $J_{\lambda_2 r_{\lambda_2}^{2}}(\hat u_{\lambda_2}^+) \rightarrow \frac{1}{N}S^{N/2}$. Thanks to Lemma \ref{ELLEM} we get that $J_{\lambda_2}( u_{\lambda_2}^+)=J_{\lambda_2 r_{\lambda_2}^{2}}(\hat u_{\lambda_2}^+) \rightarrow \frac{1}{N}S^{N/2}$ as $\gamma \rightarrow + \infty$.

Assume now that $N=3$. %By definition we have that $\beta=2$. 
As stated in Lemma  \ref{lem3} we have that $r_{\lambda_2}$ is bounded away from zero. %, hence, up to a subsequence, as $\gamma \rightarrow + \infty$, we have $r_{\lambda_2} \rightarrow r_0$.
%By definition $\tilde u_{\lambda_2}^+$ solves
%\begin{equation}\label{eq2lemest}
%\begin{cases}
%-\Delta u = \lambda_2 r_{\lambda_2}^{2} u + u^5 & \hbox{in} \ B_1,\\
%u>0 &\hbox{in} \ B_1,\\
%u=0 & \hbox{on} \ \partial B_1.
%\end{cases}
%\end{equation} 
From a well know result of Brezis and Nirenberg (see \cite{BN}, Theorem 1) we have that \eqref{eq1lemest} has a  positive solution if and only if $ \lambda_2 r_{\lambda_2}^{2} \in (\frac{\pi^2}{4},\pi^2)$. As $\gamma \rightarrow +\infty$ we must have $\lambda_2 r_{\lambda_2}^{2} \rightarrow \frac{\pi^2}{4}$. Hence, the only possibility is that $J_{\lambda_2 r_{\lambda_2}^{2}}(\hat u_{\lambda_2}^+) \rightarrow \frac{1}{3}S^{3/2}$ as $\gamma \rightarrow + \infty$. As before thanks to Lemma \ref{ELLEM} we have $J_{\lambda_2}( u_{\lambda_2}^+)=J_{\lambda_2 r_{\lambda_2}^{1/\beta}}(\hat u_{\lambda_2}^+)$ and hence $J_{\lambda_2}( u_{\lambda_2}^+) \rightarrow \frac{1}{3}S^{3/2}$ as $\gamma \rightarrow + \infty$. The proof of (i) is complete.

We now prove (ii).
By definition the rescaled function $\tilde u_{\lambda_2}^+$ solves the following problem
\begin{equation}\label{probriscal}
\begin{cases}
-\Delta  u =\frac{\lambda_2}{M_{\lambda_2,+}^{2\beta}} u+   u^{2^* -1} & \hbox{in}\ B_{\sigma_{\lambda_2}},\\
 u > 0 & \hbox{in} \  B_{\sigma_{\lambda_2}},\\
 u =0 & \hbox{on} \ \partial B_{\sigma_{\lambda_2}},
\end{cases}
\end{equation}
where $\sigma_{\lambda_2}:=M_{\lambda_2}^\beta r_{\lambda_2}$.

Since the family $(\tilde u_{\lambda_2}^+)$ is uniformly bounded, then by standard elliptic theory we get that $\tilde u_{\lambda_2}^+ \rightarrow \tilde u $ in $C_{loc}^2(B_l)$, where $l$ is the limit of ${\sigma_{\lambda_2}}$ as $\gamma \rightarrow + \infty$. 
 We want to show that $$\lim_{\gamma\rightarrow + \infty} \sigma_{\lambda_2} = + \infty,$$
 so that the limit domain is the whole $\R^N$.
We can proceed in two different ways: one is to apply directly the estimates contained in Section 1,
the other one is to apply the methods of \cite{Iac}. We choose the second approach: arguing as in the proof of Proposition 9 in \cite{Iac}, taking into account that by (i) of Theorem \ref{enestpp}, $J_{\lambda_2}(u_{\lambda_2}^+) \rightarrow \frac{1}{N} S^{N/2}$, as $\gamma \rightarrow + \infty$, we see that up to a subsequence it cannot happen that $\lim_{\gamma\rightarrow + \infty} \sigma_{\lambda_2}$ is finite. 

%Here we follow the first approach, for the other one see Section 2 of \cite{Iac}.
Since $\frac{\lambda_2}{M_{\lambda_2,+}^{2\beta}}\rightarrow 0$, as $\gamma \rightarrow +\infty$, $\tilde u_{\lambda_2}^+$ converges in $C^2_{loc}(\R^N)$ to a positive solution $\tilde u$ of 
 \begin{equation*}
\begin{cases}
-\Delta  u =   |u|^{2^* -2}u & \hbox{in}\ \R^N\\
  u\rightarrow 0 &\ \hbox{as} \ |y|\rightarrow +\infty.
\end{cases}
\end{equation*}
Observe that this holds even in the case $N=3$, in fact by definition and Remark \ref{remerkel} we have $$\frac{\lambda_1(B_{\sigma_{\lambda_2}})}{4}=\frac{\pi^2}{4M_{\lambda_2,+}^{4}r_{\lambda_2}^2}=\frac{9}{4}\pi^2 (1+o(1)) \frac{1}{M_{\lambda_2,+}^{4}}=\frac{\lambda_2}{M_{\lambda_2,+}^{4}}(1+o(1))\rightarrow 0,$$ as $\gamma \rightarrow +\infty$. %$$\frac{\frac{\lambda_2}{M_{\lambda_2,+}^{4}}}{\frac{\pi^2}{4M_{\lambda_2,+}^{4}r_{\lambda_2}^2}}\rightarrow 1.$$

Since $\tilde u$ is radial and $\tilde u(0)=1$ then $\tilde u = U_{0,\mu}$ where $\mu=\sqrt{N(N-2)}$ (see Proposition 2.2 in \cite{CSS}). The proof is complete.
\end{proof}

\begin{rem}\label{remerkel}
We observe that for $N=3$, since $\lambda_2 r_{\lambda_2}^{2} \rightarrow \frac{\pi^2}{4}$  and (d) of Theorem \ref{teolamb} holds then we deduce that $r_{\lambda_2} \rightarrow \frac{1}{3}$.
\end{rem}

\section{Asymptotic analysis of the negative part in dimension $N=6$}
In this section we focus on the case $N=6$ which means to take $k=5/2$ in (\ref{EFS}).
%Let $y(t;\gamma)$ be the unique solution of Problem (\ref{EFS}), with $k=5/2$ (in fact by definition of $k$, if $N=6$ we have $k=5/2$), 
As in \cite{ABP2} we define 

\begin{equation}\label{deftzero}
\begin{array}{lll}
t_0(\gamma)&:=&\inf \{t \in (0,+\infty);\  y^\prime >0\ \hbox{on } (t,+\infty)\},\\
y_0(\gamma)&:=&y(t_0(\gamma);\gamma).
\end{array}
\end{equation}
%and set $y_0(\gamma):=y(t_0(\gamma);\gamma)$. 
We have the following:
\begin{prop}\label{proptran}
Assume $k=5/2$. Then
\begin{itemize}
\item[(a)] $y_0(\gamma)=- \frac{1}{2} (1+o(1))$, as $\gamma \rightarrow +\infty$;
\item[(b)] $t_0(\gamma)= (\frac{2}{9} \gamma)^{2/3} (1+o(1))$, as $\gamma \rightarrow +\infty$.
\end{itemize}
\end{prop}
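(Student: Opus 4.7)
My plan is to combine a plateau analysis of the solution on the interval $(t_0,T_1(\gamma))$ with the asymptotic information already provided by Lemmas \ref{lem3}(a) and \ref{lem4} for $k=5/2$. The guiding idea is that, as $\gamma\to+\infty$, $t_0(\gamma)\to+\infty$ while $y_0(\gamma)$ stays bounded, so that on $(t_0,T_1)$ the solution $y(\cdot;\gamma)$ remains essentially equal to $y_0$ and the ODE $y''=-t^{-5/2}f(y)$ with $f(y)=y+|y|y$ can be integrated by a frozen-coefficient argument.

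Granting those a priori bounds, the sharp concentration of the kernel $s^{-5/2}$ near $s=t_0$ drives the calculation: $\int_{t_0}^{Rt_0}s^{-5/2}\,ds$ captures all but $O(R^{-3/2})$ of $\int_{t_0}^{\infty}s^{-5/2}\,ds=\tfrac{2}{3}t_0^{-3/2}$, and on $[t_0,Rt_0]$ we have $y(s)=y_0+o(1)$ uniformly as $\gamma\to\infty$. Integrating $y'(t)=\int_{t_0}^t s^{-5/2}(-f(y(s)))\,ds$ and replacing $f(y(s))$ by $f(y_0)$ therefore yields, for $t_0\ll t\le T_1$,
\[
y'(t)=-\tfrac{2}{3}f(y_0)\,t_0^{-3/2}(1+o(1)),
\]
and a second integration, together with the node condition $y(T_1)=0$, gives
\[
T_1=\tfrac{3y_0}{2f(y_0)}\,t_0^{3/2}(1+o(1)).
\]

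Multiplying these two plateau identities eliminates $t_0$ and leads to the clean relation
\[
T_1(\gamma)\cdot y'\bigl(T_1(\gamma)\bigr)=-y_0(\gamma)(1+o(1)).
\]
Plugging in $T_1(\gamma)=\tfrac{2}{9}\gamma(1+o(1))$ from Lemma \ref{lem3}(a) (noting that $A(5/2)=2/9$) and $y'(T_1(\gamma))=(3/2)^{2}\gamma^{-1}(1+o(1))=\tfrac{9}{4\gamma}(1+o(1))$ from Lemma \ref{lem4} forces the product to tend to $1/2$, so $-y_0\to 1/2$; this proves (a). Substituting back $y_0=-\tfrac12$, $f(y_0)=-\tfrac34$ into the formula for $y'(T_1)$ and equating with Lemma \ref{lem4} then gives $\tfrac12 t_0^{-3/2}=\tfrac{9}{4\gamma}(1+o(1))$, i.e.\ $t_0=\bigl(\tfrac{2\gamma}{9}\bigr)^{2/3}(1+o(1))$, proving (b).

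The main obstacle is the a priori step: rigorously establishing $t_0(\gamma)\to+\infty$ and uniform boundedness of $|y_0(\gamma)|$, without which the substitution $f(y(s))\approx f(y_0)$ in the integrals above cannot be justified. My approach would be to pass to the autonomous Emden--Fowler variables $v(\tau)=t^{-1/2}y(t)$, $\tau=\log t$, in which the equation becomes $v''-v/4+|v|v+e^{-\tau/2}v=0$; its autonomous limit has a saddle at the origin and homoclinic loops reaching $v=\pm 3/8$. The tail condition $y(t)\to\gamma$ forces $v\sim\gamma e^{-\tau/2}$, i.e.\ approach to the saddle along the stable direction, and a shadowing argument together with the monotonicity of the extrema (Lemma \ref{lem1}(b)) should yield both a priori bounds. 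Making this shadowing quantitative is the delicate point, since the non-autonomous perturbation $e^{-\tau/2}v$ is of the same order as the quadratic nonlinearity precisely in the region where $v$ is near the saddle.
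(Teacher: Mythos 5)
The paper does not actually prove this proposition: it is quoted verbatim from Atkinson--Brezis--Peletier and the proof given is the citation ``see \cite{ABP2}, Theorem 2'', so there is no internal argument to compare yours with. Your plateau computation is a correct and illuminating heuristic. Granting that $t_0(\gamma)\to+\infty$ and that $|y_0(\gamma)|$ stays bounded away from $0$ and $\infty$, the frozen-coefficient integration does go through: on $(t_0,T_1)$ one has $|y|\le|y_0|$ by monotonicity of $y$ past $t_0$, hence $|y'|\le Ct_0^{-3/2}$, hence $y=y_0+o(1)$ on $[t_0,Rt_0]$, which justifies replacing $f(y)$ by $f(y_0)$ up to an $O(R^{-3/2})$ tail; the resulting identity $T_1(\gamma)\,y'(T_1(\gamma))=-y_0(\gamma)(1+o(1))$, combined with $A(5/2)=2/9$ and $(k-1)^{1/(k-2)}=(3/2)^2=9/4$, correctly forces $y_0\to-\tfrac12$ and then $t_0=(\tfrac29\gamma)^{2/3}(1+o(1))$. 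Note also that one half of part (a) is already rigorous with no plateau argument at all: Lemma \ref{lem2} with $T=T_1(\gamma)$ and $t=t_0(\gamma)$ gives $|y_0|<|y'(T_1)|\,T_1=\tfrac12(1+o(1))$ directly from Lemmas \ref{lem3}(a) and \ref{lem4}.

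The genuine gap is exactly the one you flag yourself: the a priori statements $t_0(\gamma)\to+\infty$ and $\liminf_{\gamma\to\infty}|y_0(\gamma)|>0$ are nowhere established, and without them the substitution $f(y(s))\approx f(y_0)$ and the approximation $\int_{t_0}^{T_1}s^{-5/2}\,ds\approx\tfrac23t_0^{-3/2}$ cannot be justified; degenerate scenarios such as $y_0\to0$ along a subsequence (which is not obviously incompatible with $T_2(\gamma)$ staying bounded while $T_1(\gamma)\sim\tfrac29\gamma\to\infty$) are not excluded. The proposed remedy --- shadowing the homoclinic loop of the autonomous limit $\ddot v-v/4+|v|v=0$ in Emden--Fowler variables --- is a reasonable plan, and your computations of the saddle at the origin, the loop amplitude $|v|=3/8$, and the stable-direction asymptotics $v\sim\gamma e^{-\tau/2}$ are all correct; but, as you observe, the perturbation $e^{-\tau/2}v$ is comparable to $|v|v$ precisely in the saddle region where both $t_0$ and $T_1$ live, so the shadowing step is not a routine application of standard theorems and is left entirely unexecuted. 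As written, the proposal is a consistency check of the constants in (a) and (b) against Lemmas \ref{lem3} and \ref{lem4} rather than a proof; to complete it you must either carry out the quantitative shadowing or supply the missing a priori bounds by the energy and comparison arguments of \cite{ABP2}, which is how the cited Theorem 2 is actually proved.
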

\begin{proof}
See \cite{ABP2}, Theorem 2.
\end{proof}
Let $u_\lambda$ be any radial solution of (\ref{PBN}) with exactly two nodal regions and without loss of generality  assume that $u_\lambda(0)>0$. We denote by %$r_\lambda \in (0,1)$ the node and by 
$s_\lambda$ the global minimum point of $u_\lambda$. As in the previous section we set $M_{\lambda,+}:=\|u_\lambda^+\|_\infty$, $M_{\lambda,-}:=\|u_\lambda^-\|_\infty$, where $u_\lambda^+$, $u_\lambda^-$ are respectively the positive and the negative part of $u_\lambda$. %Thanks to Lemma \ref{lemiac} and since $u_\lambda(0)>0$ we have $M_{\lambda,+}=\|u\|_{\infty}=u_\lambda(0)$. 
Clearly, by definition, we have $u_\lambda^-(s_\lambda)=M_{\lambda,-}$. In order to estimate the energy of such solutions we need the following preliminary result.

\begin{prop} \label{propmlamb}
Let $N=6$ and let $(u_\lambda)$ be any family of radial sign-changing solutions of (\ref{PBN}) with exactly two nodal regions and such that $u_\lambda(0)>0$ for all $\lambda$. Assume that there exists $\lambda_0 \in \R^+$ such that $M_{\lambda,+} \rightarrow \infty$ as $\lambda \rightarrow \lambda_0$.
 Then $$M_{\lambda,-} \leq \frac{\lambda}{2} (1+o(1)),$$ for all $\lambda$ sufficiently close to $\lambda_0$.
\end{prop}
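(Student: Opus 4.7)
The plan is to translate the claim into the Emden-Fowler language via the transformation \eqref{tran} and then invoke Proposition \ref{proptran} directly. The crucial simplification in dimension $N=6$ is that $p=(N+2)/(N-2)=2$, so $p-1=1$ and the rescaling factor $\lambda^{1/(p-1)}$ collapses to $\lambda$; thus \eqref{tran} becomes simply $u_\lambda(r)=\lambda\, y(t;\gamma)$ with $t=(4/(\sqrt{\lambda}\,r))^4$ and $\gamma=u_\lambda(0)/\lambda$. Since $u_\lambda(0)=M_{\lambda,+}$ by Lemma \ref{lemiac}, one has $\gamma=M_{\lambda,+}/\lambda$, so the hypothesis $M_{\lambda,+}\to+\infty$ combined with $\lambda\to\lambda_0>0$ forces $\gamma\to+\infty$.

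Next I would identify the minimum point $s_\lambda$ of $u_\lambda$ with $t_0(\gamma)$. Because $u_\lambda$ has exactly two nodal regions with $u_\lambda(0)>0$, the only interior critical point of $u_\lambda$ on $(0,1)$ is $s_\lambda\in(r_\lambda,1)$, the minimum of $u_\lambda^-$. Translated via \eqref{tran}, $y(\cdot;\gamma)$ has a unique critical point on the interval $(T_2(\gamma),+\infty)$ that corresponds to $(0,1)$. Moreover, on $(T_1(\gamma),+\infty)$ the function $y$ is positive, so $y''=-t^{-k}(y+|y|^{p-1}y)<0$ there; hence $y$ is concave with $y(T_1(\gamma))=0$ and $y(+\infty)=\gamma>0$, which forces $y$ to be strictly increasing on the whole tail $(T_1(\gamma),+\infty)$ and rules out any critical point of $y$ in this interval. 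Therefore the unique critical point of $y$ in the relevant range lies in $(T_2(\gamma),T_1(\gamma))$ and must coincide with $t_0(\gamma)$, the largest zero of $y'$. This yields the clean identity
\begin{equation*}
M_{\lambda,-}=-u_\lambda(s_\lambda)=-\lambda\, y_0(\gamma).
\end{equation*}

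With this identity in hand, the conclusion is immediate. Since $N=6$ corresponds to $k=5/2$, Proposition \ref{proptran}(a) gives $y_0(\gamma)=-\tfrac{1}{2}(1+o(1))$ as $\gamma\to+\infty$, and by the first step $\gamma\to+\infty$ as $\lambda\to\lambda_0$. Substituting, one obtains $M_{\lambda,-}=\tfrac{\lambda}{2}(1+o(1))$, which in particular implies the stated inequality. The only step that deserves attention is the identification $s_\lambda\leftrightarrow t_0(\gamma)$; the concavity argument on $(T_1(\gamma),+\infty)$ is what I expect to be the one nontrivial observation, but it is short. Everything else is a direct bookkeeping with the change of variables and an appeal to the already quoted result from \cite{ABP2}.
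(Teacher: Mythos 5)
Your proof is correct and follows the paper's own argument almost verbatim: rewrite $u_\lambda$ via the transformation \eqref{tran} with $p=2$, note $\gamma=\lambda^{-1}M_{\lambda,+}\to+\infty$, identify the minimum point $s_\lambda$ with $t_0(\gamma)$, and conclude from Proposition \ref{proptran}(a). The only divergence is in justifying that identification: you prove monotonicity on the tail $(T_1(\gamma),+\infty)$ by concavity but merely assert that $s_\lambda$ is the unique critical point in the negative nodal region, whereas the paper cites Corollary 1 of \cite{Iac} together with Hopf's lemma; your assertion is easily repaired by the symmetric convexity observation ($y''>0$ where $y<0$ on $(T_2(\gamma),T_1(\gamma))$), so nothing essential is missing.
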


\begin{proof}
Let $(u_\lambda)$ be such a family of solutions. Since $N=6$, we have $p=\frac{N+2}{N-2}=2$ and thanks to the transformation (\ref{tran}) we have 
\begin{equation}\label{eq1prop3}
u_{\lambda}(r(t))=\lambda\ y(t;\gamma),
\end{equation}
for $t \in \left(\left(\frac{N-2}{\sqrt{\lambda}}\right)^{N-2},+\infty\right)$, where $\gamma=\lambda^{-1} M_{\lambda,+}$.
%Let $u_\lambda$ any radial solution of (\ref{PBN}) with exactly two nodal regions and such that $u_\lambda(0)>0$. 
We observe that the global minimum point $s_{\lambda}$ corresponds, through the transformation (\ref{tran}), to  the number $t_0(\gamma)$ defined in \eqref{deftzero}. In fact by definition we have $u_{\lambda}^\prime(s_{\lambda})=0$ so it suffices to show that $u_{\lambda}^\prime(r) <0$ for all $r \in (0,s_{\lambda})$. By Corollary 1 in \cite{Iac} we know that $u_{\lambda}^\prime(r) <0$ for all $r \in (0,r_{\lambda})$, and for all $r \in (r_{\lambda},s_{\lambda})$. Moreover since $u_{\lambda}^+$ solves (\ref{PBN}) in $B_{r_{\lambda}}$, then, by Hopf lemma it follows that $u_{\lambda}^\prime(r_{\lambda})<0$. Now, thanks to the assumptions, as $\lambda \rightarrow \lambda_0$ we have $\gamma= \lambda^{-1}M_{\lambda,+} \rightarrow + \infty$  and the result follows immediately from (\ref{eq1prop3}) and Proposition \ref{proptran}.
\end{proof}

\begin{rem}
A straight important consequence of  Proposition \ref{propmlamb} is that $M_{\lambda,-}$ is uniformly bounded for all $\lambda$ sufficiently close to $\lambda_0$. In particular there cannot exist radial sign-changing solutions of (\ref{PBN}) with the shape of a tower of two bubbles in dimension $N=6$ (this fact also holds for the dimensions $N=3,4,5$, as we will see later). This is in deep contrast with the case of higher dimensions $N\geq 7$ as showed  in \cite{Iac}. 
\end{rem}

\begin{rem}\label{rem1}
In the case of the solutions obtained in the previous section, thanks to Theorem \ref{teolamb} we deduce that $M_{\lambda_2(\gamma),-}  \leq \frac{\lambda_0}{2}(1+o(1))\leq \frac{\lambda_1(B_1)}{2}$ for all sufficiently large $\gamma \in \R^+$.
\end{rem}

%Let $u_{\lambda_2(\gamma)}$ be the radial solution with exactly two nodal regions of (\ref{PBN}) with $\lambda=\lambda_2(\gamma)$ obtained in the previous section. To simplify the notation we omit $\gamma$ and write $u_{\lambda_2}$.
In the previous section we have studied the limit energy (see Theorem \ref{enestpp}) of the positive part of the solutions $u_{\lambda_2}$. Here we consider the negative part $u_{\lambda_2}^-$ and prove that its energy $J_{\lambda_2}$ is uniformly bounded as $\gamma \rightarrow +\infty$. This is the content of the next proposition.

\begin{prop} \label{prop4}
Let $\lambda_2=\lambda_2(\gamma)$ and $u_{\lambda_2}$ be the radial solution with exactly two nodal regions of (\ref{PBN})  described in Section 2. Let $J_\lambda (u):=\frac{1}{2} \left( \int_{B_1}|\nabla u|^2 - \lambda |u|^2 \ dx\right)-\frac{1}{2^*} \int_{B_1} |u|^{2^*} \ dx$ be the energy functional related to (\ref{PBN}).
Then $$J_{\lambda_2}(u_{\lambda_2}^-)\leq \frac{\pi^3}{36}\left(\frac{\lambda_1(B_1)}{2}\right)^3,$$
for all sufficiently large $\gamma$.
\end{prop}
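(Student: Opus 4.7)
The plan is to exploit the fact that $u_{\lambda_2}^-$ is itself a classical solution of the same semilinear equation on the annular nodal region $A_{\lambda_2} := \{r_{\lambda_2} < |x| < 1\}$ with zero Dirichlet boundary data, and then reduce $J_{\lambda_2}(u_{\lambda_2}^-)$ to a pure $L^{2^*}$-integral, which for $N=6$ can be controlled by the $L^\infty$-bound on $u_{\lambda_2}^-$ coming from Remark \ref{rem1}.

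More precisely, first I would observe that $u_{\lambda_2}^-$ solves $-\Delta u = \lambda_2 u + u^{2^*-1}$ on $A_{\lambda_2}$ with $u=0$ on $\partial A_{\lambda_2}$. Multiplying by $u_{\lambda_2}^-$ and integrating by parts gives the standard testing identity
\begin{equation*}
\int_{B_1}|\nabla u_{\lambda_2}^-|^2\,dx - \lambda_2 \int_{B_1}|u_{\lambda_2}^-|^2\,dx = \int_{B_1}|u_{\lambda_2}^-|^{2^*}\,dx.
\end{equation*}
Substituting this into the definition of $J_{\lambda_2}$ yields
\begin{equation*}
J_{\lambda_2}(u_{\lambda_2}^-) = \left(\frac{1}{2}-\frac{1}{2^*}\right)\int_{B_1}|u_{\lambda_2}^-|^{2^*}\,dx = \frac{1}{N}\int_{B_1}|u_{\lambda_2}^-|^{2^*}\,dx,
\end{equation*}
which for $N=6$ (so $2^*=3$) reduces to $\frac{1}{6}\int_{B_1}(u_{\lambda_2}^-)^3\,dx$.

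Next I would apply the trivial pointwise estimate $u_{\lambda_2}^-(x) \leq M_{\lambda_2,-}$ to bound the cube by $(M_{\lambda_2,-})^3\,|B_1|$. By Proposition \ref{propmlamb} combined with Remark \ref{rem1} (which uses $\lambda_2(\gamma)\to\lambda_0<\lambda_1(B_1)$ from Theorem \ref{teolamb}\,(b)), one has $M_{\lambda_2,-}\leq \frac{\lambda_1(B_1)}{2}$ for all sufficiently large $\gamma$. Recalling that the volume of the unit ball in $\R^6$ is $|B_1| = \pi^3/6$, the estimate
\begin{equation*}
J_{\lambda_2}(u_{\lambda_2}^-)\leq \frac{1}{6}\cdot\frac{\pi^3}{6}\cdot\left(\frac{\lambda_1(B_1)}{2}\right)^3 = \frac{\pi^3}{36}\left(\frac{\lambda_1(B_1)}{2}\right)^3
\end{equation*}
follows at once.

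I do not expect serious obstacles here: every ingredient is already in place. The only point that requires a moment's care is checking that the algebraic reduction $(\tfrac{1}{2}-\tfrac{1}{2^*})=\tfrac{1}{N}$ is applied correctly and that the $L^\infty$-bound from Proposition \ref{propmlamb} is indeed available for the solutions $u_{\lambda_2(\gamma)}$ — this is precisely what Remark \ref{rem1} records, since $M_{\lambda_2(\gamma),+}\to+\infty$ as $\gamma\to+\infty$, so the hypothesis of Proposition \ref{propmlamb} is verified with $\lambda_0$ the limit value from Theorem \ref{teolamb}\,(b).
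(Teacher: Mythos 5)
Your proof is correct and follows essentially the same route as the paper: the identity $J_{\lambda_2}(u_{\lambda_2}^-)=\frac{1}{6}\int_{B_1}|u_{\lambda_2}^-|^{2^*}dx$ (which the paper phrases as membership in the Nehari manifold on the annulus, and you derive by testing the equation — the same computation), followed by the $L^\infty$-bound from Proposition \ref{propmlamb} and Remark \ref{rem1} and the volume $|B_1|=\pi^3/6$ in $\R^6$. No discrepancies to report.
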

\begin{proof}
 %we estimate $J_{\lambda_2}(u_{\lambda_2}^-)$. 
 Since $u_{\lambda_2}^-$ solves $-\Delta u = \lambda_2 u + u^p$ in the annulus $A_{r_{\lambda_2}}$, in particular it belongs to the Nehari manifold $\mathcal{N}_{\lambda_2}$ associated to that equation, which is defined by 
 \begin{equation}\label{Neha}
 \mathcal{N}_{\lambda_2}:=\{u \in H_0^1(A_{r_{\lambda_2}}); \ \|u\|_{A_{r_{\lambda_2}}}^2 - \lambda_2 |u|_{2,A_{r_{\lambda_2}}}^2=|u|_{2^*,A_{r_{\lambda_2}}}^{2^*}\}.
 \end{equation}
Hence we deduce that 
\begin{equation}\label{stima1}
J_{\lambda_2}(u_{\lambda_2}^-)=\frac{1}{6}|u_{\lambda_2}^-|_{2^*,A_{r_{\lambda_2}}}^{2^*}. %To complete the proof it will suffice to show that $|u_{\lambda_2}^-|_{2^*,A_{r_{\lambda_2}}}^{2^*}$. To this end, 
\end{equation}
Now, thanks to Proposition \ref{propmlamb}, (b) of Theorem \ref{teolamb} and Remark \ref{rem1} we have
\begin{equation} \label{stima2}
\begin{array}{lll}
\displaystyle |u_{\lambda_2}^-|_{2^*,A_{r_{\lambda_2}}}^{2^*} &=& \displaystyle \int_{A_{r_{\lambda_2}}}|u_{\lambda_2}^-|^{3} \ dx \\[14pt]
 &\leq & \displaystyle  |B_1| \|u_{\lambda_2}^-\|_\infty^3\\[8pt]
 &\leq & \displaystyle  \frac{\pi^3}{6}\left(\frac{\lambda_1(B_1)}{2}\right)^3,
\end{array}
\end{equation}
for all sufficiently large $\gamma$. From (\ref{stima1}) and (\ref{stima2}) we deduce the desired relation and the proof is complete.
\end{proof}

\begin{rem}\label{rem2}
Since $\lambda_2$ is a bounded function, by the same proof of Proposition \ref{prop4}, but without using (b) of Theorem, \ref{teolamb} we deduce anyway that $J_{\lambda_2}(u_{\lambda_2}^-)$ is uniformly bounded for all sufficiently large $\gamma$.
\end{rem}

%In order to improve the estimate of the energy of the negative part of the solutions $u_{\lambda_2}$ 
We are interested now in studying the asymptotic behavior of the family $(u_{\lambda_2}^-)$. More precisely  we show that, as $\gamma \rightarrow \infty$, the family $(u_{\lambda_2}^-)$ converges in $C_{loc}^2(B_1-\{0\})$ to the unique positive solution $u_0$ of (\ref{PBN}) with $\lambda=\lambda_0$, for some $\lambda_0 \in (0,\lambda_1(B_1))$. We point out that these results will improve the energy estimate of $u_{\lambda_2}^-$ obtained before.

The pointwise convergence of $(u_{\lambda_2}^-)$ to $u_0$ is contained in Theorem 3 of \cite{ABP2}, but here we use a different approach which is based on the arguments of \cite{Iac}. Our result is the following:

\begin{teo} \label{teoriassnn6}
Let $N=6$, up to a subsequence, we have $\lambda_2(\gamma) \rightarrow \lambda_0$, as $\gamma \rightarrow +\infty$, for some $\lambda_0 \in (0,\lambda_1(B_1))$, and $(u_{\lambda_2}^-)$ converges in $C_{loc}^2(B_1-\{0\})$  to the unique solution $u_0$ of (\ref{PBNL0}). %Moreover it holds $J_{\lambda_0}(u_{\lambda_0}) < \frac{1}{6}S^3$.  
%Moreover $$
\end{teo}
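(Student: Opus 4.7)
The plan is to extract a subsequential limit $u_0$ of $(u_{\lambda_2}^-)$ via uniform $L^\infty$ bounds together with interior elliptic regularity on compact subsets of $B_1\setminus\{0\}$, remove the isolated singularity at the origin, and finally identify $u_0$ as the unique positive radial Brezis-Nirenberg solution at some $\lambda_0\in(0,\lambda_1(B_1))$.

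By Proposition \ref{proplamb}, $\lambda_2(\gamma)<\lambda_2(B_1)$, so along a subsequence $\lambda_2(\gamma_n)\to\lambda_0\in[0,\lambda_2(B_1)]$. Since $k=5/2\in(2,3)$ when $N=6$, Lemma \ref{lem3}(a) gives $T_1(\gamma)\to+\infty$, whence via the transformation (\ref{tran}) the nodal radius $r_{\lambda_2(\gamma)}\to 0$. Moreover Proposition \ref{propmlamb} gives $M_{\lambda_2,-}\leq\frac{\lambda_2}{2}(1+o(1))$, so $(u_{\lambda_2}^-)$ is uniformly bounded in $L^\infty(B_1)$. Now $u_{\lambda_2}^-$ solves $-\Delta u=\lambda_2 u+u^2$ on $A_{r_{\lambda_2}}$ with uniformly bounded right-hand side, so on any compact $K\subset\subset B_1\setminus\{0\}$ we eventually have $K\subset A_{r_{\lambda_2(\gamma_n)}}$ and interior $W^{2,p}$--Schauder estimates yield uniform $C^{2,\alpha}(K)$ bounds. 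A diagonal Ascoli--Arzelà extraction produces, along a further subsequence, $u_{\lambda_2(\gamma_n)}^-\to u_0$ in $C^2_{\mathrm{loc}}(B_1\setminus\{0\})$, with $u_0\geq 0$ bounded, $u_0|_{\partial B_1}=0$, and $-\Delta u_0=\lambda_0 u_0+u_0^2$ in $B_1\setminus\{0\}$. Boundedness of $u_0$ and the continuous nature of the nonlinearity let the standard removable-singularity theorem extend $u_0$ to a classical solution on all of $B_1$.

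The main obstacle is the non-triviality $u_0\not\equiv 0$, which will simultaneously pin down $\lambda_0\in(0,\lambda_1(B_1))$. That $\lambda_0>0$ is immediate from the Atkinson--Brezis--Peletier non-existence result recalled in the Introduction: radial sign-changing solutions of (\ref{PBN}) do not exist for $\lambda\in(0,\lambda^*)$, so $\lambda_2(\gamma)\geq\lambda^*>0$. Combined with Proposition \ref{proptran}(a) via the transformation (\ref{tran}) -- exactly as in Proposition \ref{propmlamb} -- this yields $M_{\lambda_2,-}\to\lambda_0/2>0$, so the negative part does not vanish uniformly. To force $u_0\not\equiv 0$ I would pass the Nehari identity
\[
\|u_{\lambda_2}^-\|^2 = \lambda_2\,|u_{\lambda_2}^-|_2^2 + |u_{\lambda_2}^-|_3^3
\]
to the limit (using the uniform $L^\infty$ bound and dominated convergence), then combine with the Sobolev inequality and the Poincaré inequality on $A_{r_{\lambda_2}}$ (noting $\lambda_1(A_{r_{\lambda_2}})\to\lambda_1(B_1)$) to secure a uniform lower bound on $\|u_{\lambda_2}^-\|_{H^1_0}$. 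The delicate point is ruling out a ``flat top'' of height $\lambda_0/2$ concentrated on the shrinking scale $s_{\lambda_2}$, which would be compatible with vanishing Dirichlet energy; a Pohozaev identity on $A_{r_{\lambda_2}}$ combined with the boundary gradient asymptotics coming from Lemma \ref{lem4} should be decisive. Once $u_0\not\equiv 0$, the strong maximum principle gives $u_0>0$ in $B_1$, so $u_0$ is a positive radial Brezis--Nirenberg solution at $\lambda_0$; the existence range for positive solutions in the ball then forces $\lambda_0<\lambda_1(B_1)$, and by uniqueness of the positive radial BN solution, $u_0$ is the unique solution of (\ref{PBNL0}).
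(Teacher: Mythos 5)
Your overall skeleton (subsequence extraction via boundedness of $\lambda_2$, uniform $L^\infty$ bounds from Proposition \ref{propmlamb}, interior elliptic estimates on compact subsets of $B_1\setminus\{0\}$ after noting $r_{\lambda_2}\to 0$, removal of the singularity, and identification of $u_0$ via the Adimurthi--Yadava uniqueness result) matches the paper. But the one step you explicitly flag as ``delicate'' --- ruling out that the height $\lambda_0/2$ is attained only on the shrinking scale $s_{\lambda_2}$, so that the $C^2_{loc}(B_1\setminus\{0\})$ limit could still be $u_0\equiv 0$ --- is precisely the heart of the paper's proof, and you do not actually carry it out. Your proposed substitute is doubly problematic. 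First, the Nehari--Poincar\'e route to a lower bound on $\|u_{\lambda_2}^-\|_{H^1_0}$ needs $\lambda_0<\lambda_1(A_{r_{\lambda_2}})\to\lambda_1(B_1)$; but $\lambda_0<\lambda_1(B_1)$ is part of the \emph{conclusion} of the theorem (Proposition \ref{proplamb} only gives $\lambda_2(\gamma)<\lambda_2(B_1)$, which leaves room for $\lambda_0\geq\lambda_1(B_1)$), so this is circular. Second, even a uniform lower bound on the $H^1_0$-norm would not force $u_0\not\equiv 0$, since the energy could concentrate at the origin where you only have $C^2_{loc}$ convergence away from $0$ --- exactly what happens for $u_{\lambda_2}^+$. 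The appeal to ``a Pohozaev identity \ldots should be decisive'' is a hope, not an argument.

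The paper's resolution is elementary and purely ODE-based: writing the radial equation as $[(u_{\lambda_2}^-)'r^{5}]'=-\lambda_2 u_{\lambda_2}^- r^{5}-(u_{\lambda_2}^-)^2 r^{5}\geq -\lambda_0^2 r^{5}$ (using the uniform bound $u_{\lambda_2}^-\leq\frac{\lambda_0}{2}(1+o(1))$) and integrating twice from the minimum point $s_{\lambda_2}$, where $u_{\lambda_2}^-(s_{\lambda_2})=M_{\lambda_2,-}=\frac{\lambda_0}{2}(1+o(1))$ by Proposition \ref{proptran}(a), yields
\begin{equation*}
u_{\lambda_2}^-(r)\;\geq\;\frac{\lambda_0}{2}(1+o(1))-\frac{\lambda_0^2}{12}\,r^{2}\qquad\text{for all }r\in(s_{\lambda_2},1).
\end{equation*}
Since $s_{\lambda_2}\to 0$, this survives the limit and gives $u_0(r)\geq\frac{\lambda_0}{2}-\frac{\lambda_0^2}{12}r^2$ on $(0,1)$, hence $\lim_{r\to 0}u_0(r)=\frac{\lambda_0}{2}>0$: the equation itself prevents the profile from collapsing faster than quadratically away from its maximum, which is what kills the ``flat top on a vanishing scale'' scenario. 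A companion integration controls $u_0'(r)$ and makes the removable-singularity step explicit, and positivity of $u_0$ then forces $\lambda_0\in(0,\lambda_1(B_1))$ by the Brezis--Nirenberg existence range. You should replace your Nehari/Pohozaev sketch with this double-integration argument (or an equivalent quantitative one); as it stands your proof has a genuine gap at its most important point.
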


\begin{proof}
 %lemmas and 
%We point out that following our procedure we deduce a weaker version of (b) of Theorem \ref{teolamb}.
%\\
%
%\textbf{Step 1:}
Let us consider the family $(u_{\lambda_2}^-)$. These functions solve
\begin{equation}
\begin{cases}
-\Delta u = \lambda_2 u + u^p & \hbox{in } A_{r_{\lambda_2}},\\
u>0 & \hbox{in } A_{r_{\lambda_2}},\\
u=0 & \hbox{on } \partial A_{r_{\lambda_2}}.
\end{cases}
\end{equation}
Since $\lambda_2$ is bounded, up to a subsequence we have $\lim_{\gamma \rightarrow +\infty} \lambda_2 = \lambda_0$.
Thanks to Proposition \ref{propmlamb} we have that $u_{\lambda_2}^-$ is uniformly bounded for all sufficiently large $\gamma$ and by Lemma \ref{lem3} and the inverse transformation of (\ref{tran}) we have $r_{\lambda_2} \rightarrow 0$. Hence by standard elliptic theory, up to a subsequence, $u_{\lambda_2}^-$ converges in $C_{loc}^2(B_1-\{0\})$ as $\gamma \rightarrow + \infty$ to a solution $u_0$ of $-\Delta u = \lambda_0 u + u^p$ in $B_1-\{0\}$. 
We now proceed in four steps.
\\
\textbf{Step 1:} we have
\begin{equation}\label{lemdel1}
 \lim_{r\rightarrow 0}  u_0 (r)=\frac{\lambda_0}{2}.
\end{equation}
%\begin{proof}
Since $u_{\lambda_2}^-$ is a radial solution of (\ref{PBN}) and thanks to Proposition \ref{propmlamb}, for all sufficiently large $\gamma$, we have 

\begin{equation}\label{eq1lemdel1}
u_{\lambda_2}^-\leq \frac{\lambda_0}{2}(1+o(1)), 
\end{equation}
and then we deduce that
\begin{equation*}
\begin{array}{lll}
[(u_{\lambda_2}^-)^\prime r^{5}]^\prime &=&-{\lambda_2} u_{{\lambda_2}}^-(r) r^{5}-[u_\lambda^-(r)]^{2}r^{5}\\[8pt]
&\geq&-{\lambda_2}\frac{\lambda_0}{2}(1+o(1)) r^{5}-\left[\frac{\lambda_0}{2}(1+o(1))\right]^2r^{5}\\[8pt]
&=&-\frac{\lambda_0^2}{2}(1+o(1))^2 r^{5}-\frac{\lambda_0^2}{4}(1+o(1))^2 r^{5}\\[8pt]
&\geq&-\lambda_0^2\ r^{5}.
\end{array}
\end{equation*}
Integrating between $s_{\lambda_2}$ and $r$ (with $s_{\lambda_2}<r<1$) we get that
\begin{equation*}
(u_{\lambda_2}^-)^\prime(r) r^{5} \geq\displaystyle -\lambda_0^2 \int_{s_{\lambda_2}}^{r}t^{5} dt
\geq-\frac{\lambda_0^2}{6} r^{6}.
\end{equation*}
Hence $(u_{\lambda_2}^-)^\prime(r) \geq -\frac{\lambda_0^2}{6} r$ for all  $r\in (s_{\lambda_2}, 1)$. Integrating again between $s_{\lambda_2}$ and $r$ we have
\begin{equation*}
 u_{\lambda_2}^-(r)-\frac{\lambda_0}{2}(1+o(1)) \geq -\frac{\lambda_0^2}{12} (r^{2}-s_{\lambda_2}^2)\geq-\frac{\lambda_0^2}{12} r^{2}.
\end{equation*}
Hence $u_{\lambda_2}^-(r) \geq \frac{\lambda_0}{2}(1+o(1))-\frac{\lambda_0^2}{12} r^{2}$ for all sufficiently large $\gamma$, for all $r \in (s_{\lambda_2}, 1)$. Since $s_{\lambda_2} \rightarrow 0$, then, passing to the limit as $\gamma \rightarrow \infty$, we get that  $ u_0(r) \geq  \frac{\lambda_0}{2}-\frac{\lambda_0^2}{12} r^{2}$, for all $0<r<1$. From this inequality and (\ref{eq1lemdel1}) we deduce that $\lim_{r\rightarrow 0} u_0(r)= \frac{\lambda_0}{2}$. The proof of Step 1 is complete.\\
%\end{proof}

\textbf{Step 2:} we have
\begin{equation}\label{lemdel2}
\lim_{r \rightarrow 0} u_0^\prime (r)=0.
\end{equation}
%\label{lemdel2}
%\begin{lem}\label{lemdel2}
%We have 
%$$\lim_{r \rightarrow 0} u_0^\prime (r)=0.$$
%\end{lem}
%\begin{proof}
As in the previous step, integrating the equation between $s_{\lambda_2}$ and $r$, with $s_{\lambda_2}<r<1$,  we get that
\begin{equation*}
\begin{array}{lll}
-(\tilde u_{\lambda_2}^-)^\prime(r) r^{5} &=&\displaystyle {\lambda_2} \int_{s_{\lambda_2}}^{r} u_{\lambda_2}^- t^{5} dt + \int_{s_{\lambda_2}}^{r} ( u_{\lambda_2}^-)^{2} t^{5} dt .
\end{array}
\end{equation*}
Thanks to (\ref{eq1lemdel1}), for all sufficiently large $\gamma$ we have
\begin{equation*}
|( u_{\lambda_2}^-)^\prime(r) r^{5}| \leq \displaystyle {\lambda_2} \frac{\lambda_0}{2}(1+o(1)) \int_{s_{\lambda_2}}^{r} t^{5} dt + \frac{\lambda_0^2}{4}(1+o(1))^2 \int_{s_{\lambda_2}}^{r}t^{5} dt \leq \displaystyle \lambda_0^2 \frac{r^6}{6}.
\end{equation*}
Dividing by $r^5$ the previous inequality and passing to the limit, as $\gamma\rightarrow +\infty$, we get that  $$\displaystyle | u_0^\prime(r)|  \leq \frac{\lambda_0^2}{6} r,$$ for all $0<r<1$. Hence $\lim_{r\rightarrow 0} u_0^\prime(r)=0$ and the proof of Step 2 is complete.\\
%\end{proof}

From Step 1 and Step 2 it follows that the radial function $u_0(x)= u_0(|x|)$ can be extended to a $C^1(B_1)$ function. We still denote by $u_0$ this extension.\\ %We show that $u_0$ is a weak solution of $-\Delta u = \lambda_0 u + u^{2}$.
\textbf{Step 3:} The function $u_0$ is a weak solution in $B_1$ of 
\begin{eqnarray}\label{festproblim}
-\Delta u = \lambda_0 u + u^{2}. 
\end{eqnarray}
%\begin{lem}\label{fedistrsol}
%The function $u_0$ is a weak solution in $B_1$ of 
%\begin{eqnarray}\label{festproblim}
%-\Delta u = \lambda_0 u + u^{2} 
%\end{eqnarray}
%\end{lem}
%\begin{proof}
Let's fix a test function $\phi \in C_0^\infty (B_1)$. If $0 \notin supp(\phi)$ the proof is trivial so from now on we assume $0 \in  supp(\phi)$. Let $B_\delta$ be the ball  centered at the origin having radius $\delta>0$. 
Applying Green's formula to $\Omega(\delta):=B_1-B_\delta$, since $u_0$ is a $C^2(B_1-\{0\})$-solution of (\ref{festproblim}) and  $\phi\equiv 0$ on $\partial B_1$, we have

\begin{equation}\label{eq1fest}
 \int_{\Omega(\delta)} \nabla u_0 \cdot \nabla \phi \ dx = \lambda_0 \int_{\Omega(\delta)} \phi\ u_0 \ dx +\int_{\Omega(\delta)} \phi\ u_0^{2} \ dx + \int_{\partial B_\delta} \phi \left(\frac{\partial u_0}{\partial \nu}\right)\ d\sigma .
 \end{equation}
  We show now that $\int_{\partial B_\delta} \phi \left(\frac{\partial u_0}{\partial \nu}\right)\ d\sigma \rightarrow 0$ as $\delta \rightarrow 0$. In fact since $u_0$ is a radial function we have $\frac{\partial u_0}{\partial \nu}(x)= u_0^\prime (\delta) $ for all $x \in \partial B_\delta$, and hence we get that
\begin{eqnarray*}
\left| \int_{\partial B_\delta} \phi \left(\frac{\partial u_0}{\partial \nu}\right)\ d\sigma\right|
&\leq&| u_0^\prime (\delta)| \int_{\partial B_\delta} |\phi| \ d\sigma\\
&\leq& \omega_6| u_0^\prime (\delta)| \delta^{5} ||\phi||_{\infty}.
\end{eqnarray*}
Thanks to \eqref{lemdel2} we have $|u_0^\prime (\delta)| \delta^{5}\rightarrow 0$ as $\delta \rightarrow 0$.
To complete the proof we pass to the limit in (\ref{eq1fest}) as $\delta \rightarrow 0$. We observe that 
\begin{equation}\label{eq1wsol}
\begin{array}{lll}
\displaystyle |\nabla  u_0 \cdot \nabla \phi| \ \chi_{\Omega(\delta)} 
 &\leq&\displaystyle | \nabla u_0|^2 \ \chi_{\{| \nabla u_0|>1\}}|\nabla \phi| + | \nabla u_0| \ \chi_{\{| \nabla u_0|\leq1\}}|\nabla \phi|\\[10pt]
 &\leq&\displaystyle  | \nabla u_0|^2 \ \chi_{\{| \nabla u_0|>1\}}|\nabla \phi| +\  \chi_{\{| \nabla u_0|\leq1\}}|\nabla \phi|.
\end{array}
\end{equation}
We point out that  $\int_{B_1}|  \nabla u_0|^{2} dx$ is finite: this is an easy consequence of the fact that $u_{\lambda_2} \rightarrow u_0$ in $C^2_{loc}(B_1-\{0\})$, the family $(u_{\lambda_2})$ is uniformly bounded, \eqref{Neha} and Lebesgue's theorem.

Thus, since $\int_{B_1}|  \nabla u_0|^{2} dx$ is finite and $\phi$ has compact support, the right-hand side of (\ref{eq1wsol}) belongs to $L^1(B_1)$.
Hence from Lebesgue's theorem we have
\begin{equation}\label{eq2fest}
 \lim_{\delta \rightarrow 0} \int_{\Omega(\delta)} \nabla u_0 \cdot \nabla \phi \ dx = \int_{B_1} \nabla u_0 \cdot \nabla \phi \ dx.
\end{equation}
Since $\phi$ has compact support by Lebesgue's theorem we have 
\begin{equation}\label{eq3fest}
\begin{array}{lll}
\displaystyle \lim_{\delta \rightarrow 0}\int_{\Omega(\delta)} \phi\  u_0 \ dx &=& \displaystyle\int_{B_1} \phi\ u_0 \ dx,\\
\displaystyle \lim_{\delta \rightarrow 0}\int_{\Omega(\delta)} \phi\  u_0^{2} \ dx &=& \displaystyle\int_{B_1} \phi\  u_0^{2} \ dx.
\end{array}
\end{equation}
From  (\ref{eq1fest}), (\ref{eq2fest}), (\ref{eq3fest}) and since we have proved $\int_{\partial B(\delta)} \phi \left(\frac{\partial \tilde u}{\partial \nu}\right)\ d\sigma \rightarrow 0$ as $\delta \rightarrow 0$ it follows that
\begin{equation*}
 \int_{B_1} \nabla u_0 \cdot \nabla \phi \ dx =\lambda_0 \int_{B_1} \phi\  u_0 \ dx+\int_{B_1} \phi\  u_0^{2} \ dx,
 \end{equation*}
 which completes the proof of Step 3.\\
%\end{proof}
\textbf{Step 4:} It holds
\begin{equation} \label{lemdel3}
\lim_{r\rightarrow 1} u_0(r)=0.
\end{equation}

%\begin{proof}
First we observe that since $u_{\lambda_2}^-$ is uniformly bounded, then, its energy and in particular its $H_0^1$-norm are uniformly bounded, for all sufficiently large $\gamma$ (see Proposition \ref{prop4} and its proof).  %In fact, since $u_{\lambda_2}^-$ solves $-\Delta u = \lambda_2 u + u^p$ in the annulus $A_{r_{\lambda_2}}$, in particular it belongs to the Nehari manifold $\mathcal{N}_{\lambda_2}$ (see \eqref{Neha}). %which is defined by $$\mathcal{N}_{\lambda_2}:=\{u \in H_0^1(A_{r_{\lambda_2}}); \ \|u\|_{A_{r_{\lambda_2}}}^2 - \lambda_2 |u|_{2,A_{r_{\lambda_2}}}^2=|u|_{2^*,A_{r_{\lambda_2}}}^{2^*}\}.$$
%Hence we deduce that $J_{\lambda_2}(u_{\lambda_2}^-)=\frac{1}{6}|u_{\lambda_2}^-|_{2^*,A_{r_{\lambda_2}}}^{2^*}$ is uniformly bounded.

Now, since $u_{\lambda_2}(1)=0$, we have
$$ u_{\lambda_2}^-(r) = - \int_r^1 (u_{\lambda_2}^-)^\prime(s) \ ds, $$
and hence
\begin{equation}\label{eq1lem9}
\begin{array}{lll}
|u_{\lambda_2}^-(r) | &\leq& \displaystyle \int_r^1 |(u_{\lambda_2}^-)^\prime(s)| \ ds\\[12pt]
 &\leq& \displaystyle \int_r^1 |(u_{\lambda_2}^-)^\prime(s)| \ ds\\[12pt]
  &\leq& \displaystyle \left(\int_r^1 |(u_{\lambda_2}^-)^\prime(s)|^2 \ ds\right)^{1/2} (1-r)^{1/2}.
\end{array}
\end{equation}
Since $|\nabla u_{\lambda_2}^-|_{2,A_{r_{\lambda_2}}}$ is uniformly bounded, then, taking the limsup in  (\ref{eq1lem9}) as $\gamma \rightarrow + \infty$ we get that
$$|u_{0}^-(r) | \leq c (1-r)^{1/2},$$
and from this we deduce that $ \lim_{r\rightarrow 1} u_{0}^-(r)=0$, and we are done. The proof of Step 4 is complete.\\

Thanks to Step 1 - Step 4 we get that $u_0 \in H_{0,rad}^1(B_1)$ is a weak solution of 
\begin{equation}\label{PBNL0}
\begin{cases}
-\Delta u = \lambda_0 u + u^2 & \hbox{in } B_1,\\
u>0 & \hbox{in } B_1,\\
u=0 & \hbox{on } \partial B_1.
\end{cases}
\end{equation}
In particular, as a consequence of a well known result of Brezis and Kato (for instance see  Lemma 1.5 in \cite{BN}) it is possible to show that  $u_0$ is a classical solution of (\ref{PBNL0}) (see Appendix B of \cite{STRUWE}). Thanks to \cite{AYU} we know that $u_0$ is the unique positive radial solution of (\ref{PBNL0}), which is the one found by Brezis and Nirenberg in \cite{BN}. Hence we must have $\lambda_0 < \lambda_1(B_1)$ and $J_{\lambda_0}(u_{\lambda_0}) < \frac{1}{6}S^3$.  
\end{proof}

%From this discussion we have proved that

Next result gives a characterization of the value $\lambda_0 \in (0,\lambda_1(B_1))$ appearing in Theorem \ref{teoriassnn6}.
\begin{teo}\label{carattlambda0}
Let $\lambda_0:=\lim_{\gamma \rightarrow +\infty} \lambda_2(\gamma)$. We have that $\lambda_0$ is the unique $\lambda \in (0,\lambda_1(B_1))$ such that $u_\lambda(0)=\frac{\lambda}{2}$, where $u_\lambda$ is the unique positive solution of 
\begin{equation}\label{PBNPL}
\begin{cases}
-\Delta u = \lambda u + u^2 & \hbox{in } B_1,\\
u>0 & \hbox{in } B_1,\\
u=0 & \hbox{on } \partial B_1.
\end{cases}
\end{equation}
\end{teo}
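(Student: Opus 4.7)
I would split the argument into an existence part (showing that $\lambda_0$ satisfies the identity $u_{\lambda_0}(0)=\lambda_0/2$) and a uniqueness part (showing that exactly one $\lambda\in(0,\lambda_1(B_1))$ can satisfy this identity). Combining the two upgrades the a priori subsequential convergence in Theorem \ref{teoriassnn6} to a genuine limit.

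For existence, I would revisit the proof of Theorem \ref{teoriassnn6}. Along the subsequence where $\lambda_2(\gamma)\to\lambda_0$, Step 1 of that proof yields $\lim_{r\to 0^+}u_0(r)=\lambda_0/2$, while Steps 1--2 guarantee that $u_0$ extends as a $C^1$ function on $B_1$ and coincides with the unique positive classical solution $u_{\lambda_0}$ of (\ref{PBNL0}). Evaluating this continuous extension at the origin gives $u_{\lambda_0}(0)=\lambda_0/2$, as required.

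For uniqueness I would use the Emden--Fowler change of variables (\ref{tran}). In dimension $N=6$ one has $p=2$ and thus $p-1=1$, so the transformation sends any positive solution $u_\lambda$ of (\ref{PBNPL}) to the solution $y(\,\cdot\,;\gamma)$ of (\ref{EFS}) with $\gamma=\lambda^{-1}u_\lambda(0)$, and the domain correspondence imposes $T_1(\gamma)=(4/\sqrt{\lambda})^{4}=256/\lambda^{2}$. The identity $u_\lambda(0)=\lambda/2$ therefore translates to $\gamma=1/2$, which forces $T_1(1/2)=256/\lambda^2$, i.e.\ $\lambda=16/\sqrt{T_1(1/2)}$. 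Since $y(\,\cdot\,;1/2)$ is the unique solution of (\ref{EFS}) with $\gamma=1/2$, its largest zero $T_1(1/2)$ is a single fixed positive number, so $\lambda$ is uniquely determined.

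To conclude, I would observe that every subsequential limit of $\lambda_2(\gamma)$ must satisfy the identity $u_\lambda(0)=\lambda/2$ by the existence argument, and uniqueness forces all such limits to coincide; hence the full limit $\lim_{\gamma\to+\infty}\lambda_2(\gamma)$ exists and equals the unique $\lambda$ characterized above. The only delicate point I foresee is the bookkeeping of subsequences: one must apply the existence step to every subsequential limit before invoking uniqueness, in order to rigorously promote the a priori subsequential convergence of Theorem \ref{teoriassnn6} to convergence of the full family as $\gamma\to+\infty$.
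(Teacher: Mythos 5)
Your proposal is correct and follows essentially the same route as the paper: membership of $\lambda_0$ in the set of admissible $\lambda$'s comes from Theorem \ref{teoriassnn6} and \eqref{lemdel1}, and uniqueness comes from the fact that any $\lambda$ with $u_\lambda(0)=\lambda/2$ corresponds under \eqref{tran} to the unique solution $y(\cdot;1/2)$ of \eqref{EFS}. Your write-up is in fact slightly more explicit than the paper's at the one place where the paper is terse, namely in observing that the boundary condition forces $T_1(1/2)=256/\lambda^2$ so that the single number $T_1(1/2)$ pins down $\lambda$, and in the subsequence bookkeeping that upgrades the subsequential convergence of Theorem \ref{teoriassnn6} to convergence of the full family.
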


\begin{proof}
Thanks to Theorem \ref{teoriassnn6} and \eqref{lemdel1} we have that the set 
$$\Gamma:=\left\{\lambda \in (0,\lambda_1(B_1)); \ u_\lambda(0)=\frac{\lambda}{2}, \ \hbox{where} \ u_\lambda \ \hbox{is the unique solution of \eqref{PBNPL}}\right\}, $$
is not empty since $\lambda_0 \in \Gamma$. We want to prove that $\Gamma=\{\lambda_0\}$. To this end assume that $\bar\lambda \in \Gamma$ and $\bar \lambda \neq \lambda_0$. In particular the functions $u_{\lambda_0}$ and $u_{\bar\lambda}$ are different. Thanks to the definition of $\Gamma$ and applying \eqref{tran} (with $p=2$ because $N=6$) we get that $u_{\lambda_0}$ and $u_{\bar\lambda}$ are respectively transformed to a solution of \eqref{EFS} with $\gamma=\frac{1}{2}$, but, for a given $\gamma$, the solution of \eqref{EFS} is unique and this gives a contradiction.
\end{proof}

Now we have all the tools to estimate the energy of the solutions $u_{\lambda_2}$. This is the content of the next result.

\begin{cor}\label{enest}
 Let $N=6$ and let $u_{\lambda_2}$ be the radial solution with exactly two nodal regions of (\ref{PBN}) with $\lambda=\lambda_2(\gamma)$ obtained in Section 2. Then
$$J_{\lambda_2} (u_{\lambda_2}) < \frac{1}{3}S^{3},$$
for all sufficiently large $\gamma \in \R^+$, where $J_\lambda (u):=\frac{1}{2} \left( \int_{B_1}|\nabla u|^2 - \lambda |u|^2 \ dx\right)-\frac{1}{2^*} \int_{B_1} |u|^{2^*} \ dx$ is the energy functional related to (\ref{PBN}), $S$ is the best Sobolev constant for the embedding of $\mathcal{D}^{1,2}(\R^6)$ into $L^{2^*}(\R^6)$.
\end{cor}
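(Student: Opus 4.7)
The natural approach is to estimate the two parts of $u_{\lambda_2}$ separately and then add them, exploiting the splitting
\begin{equation*}
J_{\lambda_2}(u_{\lambda_2}) = J_{\lambda_2}(u_{\lambda_2}^+) + J_{\lambda_2}(u_{\lambda_2}^-),
\end{equation*}
which holds because $u_{\lambda_2}^+$ and $u_{\lambda_2}^-$ have essentially disjoint supports (separated by the nodal sphere $\{|x|=r_{\lambda_2}\}$). By Theorem \ref{enestpp}(i), the first term tends to $\frac{1}{6}S^{3}$ as $\gamma\to+\infty$, so the whole issue is to control the limit of $J_{\lambda_2}(u_{\lambda_2}^-)$ and to show it is strictly less than $\frac{1}{6}S^{3}$.

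For the negative part, I would use the Nehari identity. Since $u_{\lambda_2}^-$ solves $-\Delta u=\lambda_2 u+u^{2}$ in the annulus $A_{r_{\lambda_2}}$, testing with $u_{\lambda_2}^-$ itself gives $\|u_{\lambda_2}^-\|^2-\lambda_2|u_{\lambda_2}^-|_2^2=|u_{\lambda_2}^-|_{2^*}^{2^*}$, so
\begin{equation*}
J_{\lambda_2}(u_{\lambda_2}^-)=\left(\tfrac{1}{2}-\tfrac{1}{2^*}\right)|u_{\lambda_2}^-|_{2^*}^{2^*}=\tfrac{1}{6}\,|u_{\lambda_2}^-|_{2^*}^{2^*}.
\end{equation*}
By Theorem \ref{teoriassnn6} we have $u_{\lambda_2}^-\to u_0$ in $C^2_{loc}(B_1\setminus\{0\})$, and by Proposition \ref{propmlamb} the family $(u_{\lambda_2}^-)$ is uniformly bounded in $L^\infty(B_1)$. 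Since $r_{\lambda_2}\to 0$, extending $u_{\lambda_2}^-$ by $0$ on $B_{r_{\lambda_2}}$ we get pointwise a.e. convergence on $B_1$ with a uniform $L^\infty$ bound, and Lebesgue's dominated convergence theorem gives $|u_{\lambda_2}^-|_{2^*}^{2^*}\to|u_0|_{2^*}^{2^*}$. The same Nehari identity applied to the limit $u_0$ (which solves \eqref{PBNL0}) yields $J_{\lambda_0}(u_0)=\frac{1}{6}|u_0|_{2^*}^{2^*}$, hence $J_{\lambda_2}(u_{\lambda_2}^-)\to J_{\lambda_0}(u_0)$.

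To conclude, I invoke the inequality $J_{\lambda_0}(u_0)<\frac{1}{6}S^{3}$, which is precisely the last assertion recorded at the end of the proof of Theorem \ref{teoriassnn6} (it expresses that the Brezis--Nirenberg positive solution lies strictly below the critical threshold $\frac{1}{N}S^{N/2}$). Adding the two asymptotics gives
\begin{equation*}
\lim_{\gamma\to+\infty} J_{\lambda_2}(u_{\lambda_2}) = \tfrac{1}{6}S^{3} + J_{\lambda_0}(u_0) < \tfrac{1}{6}S^{3} + \tfrac{1}{6}S^{3} = \tfrac{1}{3}S^{3},
\end{equation*}
so the strict inequality $J_{\lambda_2}(u_{\lambda_2})<\frac{1}{3}S^{3}$ is preserved for all sufficiently large $\gamma$. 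The only mildly delicate point is the passage to the limit for the negative part, but since convergence is only local around the origin we must justify it via the $L^\infty$ bound of Proposition \ref{propmlamb} and the fact that the ``bad set'' $B_{r_{\lambda_2}}$ shrinks to a point; once the Nehari identity reduces the energy to an $L^{2^*}$ integral, dominated convergence does the job cleanly.
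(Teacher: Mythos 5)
Your proposal is correct and follows essentially the same route as the paper: the splitting $J_{\lambda_2}(u_{\lambda_2})=J_{\lambda_2}(u_{\lambda_2}^+)+J_{\lambda_2}(u_{\lambda_2}^-)$, Theorem \ref{enestpp} for the positive part, the Nehari identity reducing $J_{\lambda_2}(u_{\lambda_2}^-)$ to $\frac{1}{6}|u_{\lambda_2}^-|_{2^*}^{2^*}$, dominated convergence via the uniform $L^\infty$ bound of Proposition \ref{propmlamb}, and the strict inequality $J_{\lambda_0}(u_0)<\frac{1}{6}S^{3}$ from the end of Theorem \ref{teoriassnn6}. Your justification of the passage to the limit on the shrinking annulus is, if anything, slightly more explicit than the paper's.
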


\begin{proof}
Let $(u_{\lambda_2})$ be this family of solutions. Observe that $J_{\lambda_2} (u_{\lambda_2})=J_{\lambda_2} (u_{\lambda_2}^+)+J_{\lambda_2} (u_{\lambda_2}^-)$ hence it suffices to estimate separately the energy of the positive and negative part of $u_{\lambda_2}$. The energy of $u_{\lambda_2}^+$ has been determinated in Theorem \ref{enestpp}, and in particular we have $J_{\lambda_2}(u_{\lambda_2}^+) \rightarrow \frac{1}{6}S^3$, as $\gamma \rightarrow + \infty$.

%Since $u_{\lambda_2}^+$ solves $-\Delta u = \lambda_2 u + u^p$ in $B_{r_{\lambda_2}}$ then, considering the rescaling $\tilde u_{\lambda_2}^+(y):=r_{\lambda_2}^{1/\beta} u_{\lambda_2}^+(r_{\lambda_2} y)$, where $\beta:=\frac{2}{N-2}$, we see that $\tilde u_{\lambda_2}^+$ solves 
%
%\begin{equation}\label{eq1lemest}
%\begin{cases}
%-\Delta u = \lambda_2 r_{\lambda_2}^{2} u + u^p & \hbox{in} \ B_1,\\
%u>0 &\hbox{in} \ B_1,\\
%u=0 & \hbox{on} \ \partial B_1.
%\end{cases}
%\end{equation}
%
%Thanks to Lemma \ref{lem3} we know that $r_{\lambda_2}\rightarrow 0$ as $\gamma \rightarrow + \infty$, in particular this is true for $\lambda_2 r_{\lambda_2}^{2}$. From \cite{AYU} we know that $\tilde u_{\lambda_2}^+$ is unique and it coincides with the solution found in \cite{BN}, thus, since $\lambda_2 r_{\lambda_2}^{2} \rightarrow 0$ as $\gamma \rightarrow + \infty$ we get that $J_{\lambda_2 r_{\lambda_2}^{2}}(\tilde u_{\lambda_2}^+) \rightarrow \frac{1}{6}S^3$. Thanks to Lemma \ref{ELLEM} we get that 

Now we estimate $J_{\lambda_2}(u_{\lambda_2}^-)$. Since $u_{\lambda_2}^-$ solves $-\Delta u = \lambda_2 u + u^p$ in the annulus $A_{r_{\lambda_2}}$, in particular it belongs to the Nehari manifold $\mathcal{N}_{\lambda_2}$ associated to this equation, (see \eqref{Neha}).
Hence we deduce that $J_{\lambda_2}(u_{\lambda_2}^-)=\frac{1}{6}|u_{\lambda_2}^-|_{2^*,A_{r_{\lambda_2}}}^{2^*}$. To complete the proof it will suffice to show that $$|u_{\lambda_2}^-|_{2^*,A_{r_{\lambda_2}}}^{2^*} \rightarrow |u_{\lambda_0}^-|_{2^*,B_1}^{2^*},$$
where $u_0$ is the unique solution of (\ref{PBNL0}). In fact, thanks to Theorem \ref{teoriassnn6} we know that, up to a subsequence, $(u_{\lambda_2}^-)$ converges in $C_{loc}^2(B_1-\{0\})$  to the unique solution $u_0$ of (\ref{PBNL0}). Hence to prove our assertion it suffices to apply Lebesgue's theorem, which clearly holds since $(u_{\lambda_2}^-)$ is uniformly bounded as $\gamma \rightarrow +\infty$.

Now since $J_{\lambda_2}(u_{\lambda_2}^-) \rightarrow  J_{\lambda_0}(u_{\lambda_0})$ and  $J_{\lambda_0}(u_{\lambda_0}) < \frac{1}{6} S^3$ we deduce the desired relation.
\end{proof}

\section{Asymptotic analysis of the negative part in dimension $N=3,4,5$}
Here we prove:
\begin{teo} \label{propmlamb2}
Let $N=3,4,5$ and let $(u_\lambda)$ be any family of radial sign-changing solutions of (\ref{PBN}) with exactly two nodal regions and such that $u_\lambda(0)>0$ for all $\lambda$. Assume that there exists $\bar\lambda \in \R^+$ such that $M_{\lambda,+} \rightarrow \infty$, as $\lambda \rightarrow \bar\lambda$.
 Then 
 \begin{itemize}
\item[(i):] $M_{\lambda,-} \rightarrow 0$, as $\lambda \rightarrow \bar\lambda$;
\item[(ii):] $(u_{\lambda}^-)$ converges to zero uniformly in $B_1$, as $\lambda\rightarrow \bar\lambda$.
\end{itemize}
\end{teo}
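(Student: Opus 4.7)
The strategy is to mirror the proof of Proposition \ref{propmlamb} by reducing $M_{\lambda,-}$ to a property of the Emden--Fowler problem \eqref{EFS}, and then to use Lemmas \ref{lem2}--\ref{lem4} to show that in the low dimensions $N=3,4,5$ the auxiliary quantity $y_0(\gamma)$ defined in \eqref{deftzero} actually vanishes as $\gamma\to+\infty$. Once (i) is in place, (ii) will be immediate, since $\|u_\lambda^-\|_\infty=M_{\lambda,-}$ and $u_\lambda^-\ge 0$.

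I would first repeat verbatim the identification worked out in Proposition \ref{propmlamb}: since $\gamma=\lambda^{-1/(p-1)}M_{\lambda,+}\to+\infty$ as $\lambda\to\bar\lambda$, the transformation \eqref{tran} gives $u_\lambda(r(t))=\lambda^{1/(p-1)}y(t;\gamma)$, and the global minimum point $s_\lambda$ of $u_\lambda$ corresponds, via the Hopf lemma together with the monotonicity statement of Corollary 1 in \cite{Iac}, to the extremum point $t_0(\gamma)$. Consequently
\begin{equation*}
M_{\lambda,-}=-u_\lambda(s_\lambda)=\lambda^{1/(p-1)}\,|y_0(\gamma)|,
\end{equation*}
and, since $\lambda$ stays bounded, statement (i) reduces to showing $|y_0(\gamma)|\to 0$ as $\gamma\to+\infty$ in the three cases $k=4,\,3,\,8/3$ corresponding to $N=3,4,5$ respectively.

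The heart of the proof is then an elementary chain of inequalities. Because $u_\lambda(0)>0$, $u_\lambda>0$ on $(0,r_\lambda)$ and $u_\lambda<0$ on $(r_\lambda,1)$, we have $r_\lambda<s_\lambda<1$, which under \eqref{tran} becomes $T_2(\gamma)<t_0(\gamma)<T_1(\gamma)$, so Lemma \ref{lem2} applied at the largest zero $T=T_1(\gamma)$ gives
\begin{equation*}
|y_0(\gamma)|\le |y'(T_1(\gamma))|\,(T_1(\gamma)-t_0(\gamma))\le |y'(T_1(\gamma))|\,T_1(\gamma).
\end{equation*}
Plugging in the asymptotics $|y'(T_1(\gamma))|=(k-1)^{1/(k-2)}\gamma^{-1}(1+o(1))$ of Lemma \ref{lem4}, together with the dimension-dependent asymptotics of Lemma \ref{lem3} ($T_1(\gamma)$ bounded for $N=3$, $T_1(\gamma)\sim 2\log\gamma$ for $N=4$, $T_1(\gamma)\sim A(8/3)\gamma^{2/3}$ for $N=5$), one obtains
\begin{equation*}
|y_0(\gamma)|=O\bigl(\gamma^{-1}T_1(\gamma)\bigr)=
\begin{cases}
O(\gamma^{-1}) & \text{if } N=3,\\
O(\gamma^{-1}\log\gamma) & \text{if } N=4,\\
O(\gamma^{-1/3}) & \text{if } N=5,
\end{cases}
\end{equation*}
so in each case $y_0(\gamma)\to 0$, which proves (i); (ii) then follows at once.

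I do not foresee any real obstacle, as all the ingredients are already recalled in Section 2; the only noteworthy point is that the same estimate fails precisely from $N=6$ onwards, in agreement with the results already established: for $N=6$ Lemma \ref{lem3}(a) yields $T_1(\gamma)\sim A(5/2)\gamma$, so that $\gamma^{-1}T_1$ stays bounded but does not vanish, matching the non-trivial limit $y_0(\gamma)\to-1/2$ of Proposition \ref{proptran}, while for $N\ge 7$ one has $\gamma^{-1}T_1\to+\infty$ and the estimate is trivial, consistently with the tower-of-bubbles profile of \cite{Iac}.
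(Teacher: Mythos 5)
Your proposal is correct and follows essentially the same route as the paper's own proof: the identification of $s_\lambda$ with $t_0(\gamma)$, the bound $|y_0(\gamma)|\le |y'(T_1(\gamma))|\,T_1(\gamma)$ from Lemma \ref{lem2}, and the dimension-by-dimension asymptotics from Lemmas \ref{lem3} and \ref{lem4} are exactly the ingredients used there. The only additions are cosmetic (the sharper localization $T_2(\gamma)<t_0(\gamma)<T_1(\gamma)$ in place of $0<t_0(\gamma)<T_1(\gamma)$, and the consistency check for $N\ge 6$), neither of which changes the argument.
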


\begin{proof} We start by proving (i).
Let $(u_\lambda)$ be such a family of solutions. Thanks to the transformation (\ref{tran}) we have 
\begin{equation}\label{eq1prop3N}
u_{\lambda}(r(t))=\lambda^{\frac{1}{p-1}}\ y(t;\gamma),
\end{equation}
for $t \in \left(\left(\frac{N-2}{\sqrt{\lambda}}\right)^{N-2},+\infty\right)$, where $\gamma=\lambda^{-\frac{1}{p-1}} M_{\lambda,+}$ and $y=y(t;\gamma)$ solves (\ref{EFS}). Clearly, as $\lambda \rightarrow \bar\lambda$, we have $\gamma\rightarrow+ \infty$. As in the proof of Proposition \ref{propmlamb} we have that the global minimum point $s_{\lambda}$ corresponds, through the transformation (\ref{tran}), to  the number $t_0(\gamma)$ defined in \eqref{deftzero}. 

Hence, thanks to Lemma \ref{lem2}, it holds
\begin{equation}\label{stimmintzero}
|y(t_0(\gamma);\gamma)| < |y^\prime(T_1(\gamma))| \ (T_1(\gamma)- t_0(\gamma)). 
\end{equation}
For $N=3$, which corresponds to $k=4$, by Lemma \ref{lem3} we have that $T_1(\gamma)$ is uniformly bounded for all sufficiently large $\gamma$, while, by Lemma \ref{lem4} it holds $y^\prime(T_1(\gamma))=(k-1)^{\frac{1}{k-2}}\gamma^{-1}(1+o(1))$. Thus, since $0<t_0(\gamma)<T_1(\gamma)$, from \eqref{stimmintzero}, \eqref{eq1prop3N} we get that $M_{\lambda,-}=\lambda^{\frac{1}{p-1}} y(t_0;\gamma) \rightarrow 0$ as $\lambda\rightarrow \bar\lambda$.

For $N=4$, which corresponds to $k=3$, by Lemma \ref{lem3} we have that $T_1(\gamma)=2 \log(\gamma) (1+o(1))$ for all sufficiently large $\gamma$, and hence as in the previous case, we get that $M_{\lambda,-}=\lambda^{\frac{1}{p-1}} y(t_0;\gamma) \rightarrow 0$ as $\lambda\rightarrow \bar\lambda$. The same happens for $N=5$ ($k=8/3$); in fact by Lemma \ref{lem3} we have that $T_1(\gamma)=A\gamma^{2/3} (1+o(1))$ for all sufficiently large $\gamma$, where $A=A(k)$ is a positive constant depending only on $k$ (see Lemma \ref{lem3} for its definition). The proof of (i) is complete.

Now we prove (ii). We recall that %by definition
 $u_{\lambda}^-$ is nonzero in the annulus $A_{r_{\lambda}}(0)=\{x \in \R^N; \ r_{\lambda} < |x|<1\}$ and vanishes outside. Thanks  to (i), we have $\|u_{\lambda}\|_{\infty,B_1} =M_{\lambda,-}\rightarrow 0$ as $\lambda\rightarrow \bar\lambda$ and we are done.
%Let $u_\lambda$ any radial solution of (\ref{PBN}) with exactly two nodal regions and such that $u_\lambda(0)>0$. 
 %Now, thanks to the assumptions, as $\lambda \rightarrow \lambda_0$ we have $\gamma= \lambda^{-1}M_{\lambda,+} \rightarrow + \infty$  and the result follows immediately from (\ref{eq1prop3}) and Proposition \ref{proptran}.
\end{proof}
%
%An immediate consequence of the previous proposition in the following:
%\begin{teo}
%For  $N=3,4,5$ we have that 
%\end{teo}
%\begin{proof}
%We recall that %by definition
 %$u_{\lambda_2}^-$ is nonzero in the annulus $A_{r_{\lambda_2}}(0)=\{x \in \R^N; \ r_{\lambda_2} < |x|<1\}$ and vanishes outside. Thanks to Proposition \ref{propmlamb2}, we have $\|u_{\lambda_2}\|_{\infty,B_1} =M_{\lambda_2,-}\rightarrow 0$ as $\gamma \rightarrow +\infty$ and we are done.
%\end{proof}

\end{document}